\newtheorem{prop}{Proposition}[section]
\newtheorem{thm}[prop]{Theorem}
\newtheorem{cor}[prop]{Corollary}
\newtheorem{lem}[prop]{Lemma}
\theoremstyle{definition}
\newtheorem{defn}[prop]{Definition}
\newtheorem{rem}[prop]{\it Remark}
\newtheorem{lemdefn}[prop]{Lemma-Definition}
\newtheorem*{claim*}{Claim}
\newcommand{\bC}{\mathbb{C}}
\newcommand{\bR}{\mathbb{R}}
\newcommand{\bQ}{\mathbb{Q}}
\newcommand{\bN}{\mathbb{N}}
\newcommand{\tS}{\widetilde{S}}
\newcommand{\tD}{\widetilde{D}}
\newcommand{\cO}{\mathcal{O}}
\newcommand{\cF}{\mathcal{F}}
\newcommand{\cJ}{\mathcal{J}}
\newcommand{\fa}{\mathfrak{a}}
\newcommand{\fb}{\mathfrak{b}}
\newcommand{\fc}{\mathfrak{c}}
\newcommand{\fm}{\mathfrak{m}}
\newcommand{\fab}{\fa_{\bullet}}
\newcommand{\fbb}{\fb_{\bullet}}
\newcommand{\fcb}{\fc_{\bullet}}
\newcommand{\rd}{\mathrm{d}}
\newcommand{\Spec}{\mathbf{Spec}}
\newcommand{\mult}{\mathrm{mult}}
\newcommand{\lct}{\mathrm{lct}}
\newcommand{\Aut}{\mathrm{Aut}}
\newcommand{\vol}{\mathrm{vol}}
\newcommand{\ord}{\mathrm{ord}}
\newcommand{\Val}{\mathrm{Val}}
\newcommand{\hvol}{\widehat{\rm vol}}
\numberwithin{equation}{section}
\title{Uniqueness of the minimizer of the normalized volume function}
\author{Chenyang Xu}
\address{Department of Mathematics, MIT, Cambridge, MA, 02139.}
\email{cyxu@mit.edu}
\address{BICMR, Peking University, Beijing, 100871.}
\email{cyxu@math.pku.edu.edu}
\author{Ziquan Zhuang}
\address{Department of Mathematics, MIT, Cambridge, MA, 02139.}
\email{ziquan@mit.edu}
\date{}
\begin{document}

\maketitle

{\let\thefootnote\relax\footnotetext{CX is partially supported by the NSF (No. 1901849).}
\marginpar{}
}

\begin{abstract}
We confirm a conjecture of Chi Li which says that the minimizer of the normalized volume function for a klt singularity is unique up to rescaling. This is achieved by defining stability thresholds for valuations, and then showing that a valuation is a minimizer if and only if it is K-semistable, and that K-semistable valuation is unique up to rescaling. As applications, we prove a finite degree formula for volumes of klt singularities and an effective bound of the local fundamental group of a klt singularity.
\end{abstract}

\section{Introduction}
Throughout this paper, we work over an algebraically closed field $k$ of characteristic 0. Given a klt singularity $x\in (X,\Delta)$, Chi Li introduced in \cite{Li18} the \emph{normalized volume function} $\hvol_{X,\Delta}$ on the space $\Val_{X,x}$ of real valuations on the function field $K(X)$ of $X$ that are centered on $x$. Motivated by the study of K-stability of Fano varieties, the minimizing valuation of $\hvol_{X,\Delta}$ is conjectured to have a number of deep geometric properties, which together comprise the so-called \emph{Stable Degeneration Conjecture}, see \cites{Li18, LX18}.

There has been a lot of progress on the solution of different parts of the Stable Degeneration Conjecture in \cites{Blu18, Li17, LX18, LX20, Xu20}. In particular, it has been known that a minimizing valuation exists (see \cite{Blu18}) and it is always quasi-monomial (see \cite{Xu20}).

\subsection{Main Theorems}

In this paper, we aim to solve another part of the Stable Degeneration Conjecture, namely, the uniqueness of the minimizing valuation, as conjectured in \cite{Li18}*{Conjecture 7.1.2}.
\begin{thm}\label{t-mainunique}
Let $x\in (X,\Delta)$ be a klt singularity, then up to rescaling, there is a unique minimizer $v_0$ of the normalized volume function $\hvol_{X,\Delta}$. 
\end{thm}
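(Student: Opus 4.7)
The plan is to follow the three-step program announced in the abstract: introduce a local stability threshold for valuations, characterize minimizers of $\hvol_{X,\Delta}$ as the K-semistable valuations, and then prove uniqueness of K-semistable valuations up to rescaling.

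By \cite{Xu20} every minimizer of $\hvol_{X,\Delta}$ is already quasi-monomial, so I may take $v_0$ and $v_1$ to be two quasi-monomial minimizers normalized so that $A_{X,\Delta}(v_i)=1$. For each valuation $v\in\Val_{X,x}$ I would next define a stability threshold $\delta(v)$ in the spirit of Fujita--Odaka and Blum--Jonsson. Using the filtration of $\cO_{X,x}$ by valuative ideals $\fa_m(v)=\{f:v(f)\ge m\}$, one defines an expected $w$-vanishing invariant $S_v(w)$ by a suitable average of $w$-values of a basis compatible with $\fa_\bullet(v)$, and sets
\[
\delta(v)=\inf_{w\in\Val_{X,x}}\frac{A_{X,\Delta}(w)}{S_v(w)}.
\]
The key analytic statement to establish is then: $v$ is a minimizer of $\hvol_{X,\Delta}$ if and only if $\delta(v)\ge 1$, in which case equality $\delta(v)=1$ holds. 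This should follow by differentiating $\hvol$ along the one-parameter family of valuations interpolating between $v$ and a test valuation $w$, in the spirit of the Li--Xu derivative formula.

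For the uniqueness step, given two K-semistable $v_0$ and $v_1$ with $\delta(v_i)=1$, I apply the inequality $A_{X,\Delta}(\,\cdot\,)\ge S_{v_i}(\,\cdot\,)$ to the other valuation and combine the two resulting estimates with a mixed-volume identity relating $S_{v_0}(v_1)$, $S_{v_1}(v_0)$, $\vol(v_0)$ and $\vol(v_1)$. Together with a Brunn--Minkowski-type strict concavity of $v\mapsto\vol(v)^{1/n}$ on the cone of quasi-monomial valuations over a fixed log smooth model, the equality case forces the filtrations $\fa_\bullet(v_0)$ and $\fa_\bullet(v_1)$ to be proportional, hence $v_0$ and $v_1$ agree after rescaling.

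I expect the main obstacle to be this rigidity step. Extracting a full rescaling statement from the bare inequality $\delta\ge 1$ requires a \emph{strict} Brunn--Minkowski inequality for multiplicities of graded families of ideals, together with a precise characterization of its equality case. This is the local analogue of the jump from K-semistability to uniqueness of the K-polystable degeneration in the global Fano setting, and adapting the global argument (which proceeds via equivariant K-stability and common degenerations) to the non-finitely-generated setting of arbitrary quasi-monomial valuations is likely the technical crux.
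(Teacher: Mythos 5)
Your overall architecture (define a local stability threshold via filtration-compatible bases, prove ``minimizer $\Leftrightarrow$ K-semistable'', then prove uniqueness of K-semistable valuations) is exactly the paper's strategy, but the two steps you leave as ``should follow'' are precisely where the paper has to do genuinely new work, and as stated your versions would not go through. First, the derivative step: you propose to differentiate $\hvol$ ``along the one-parameter family of valuations interpolating between $v$ and a test valuation $w$''. No such family is available in general --- even for two quasi-monomial valuations one would need them to lie on a common stratum of a dual complex, and the test valuation $w$ in the definition of $\delta$ is an arbitrary element of $\Val^*_{X,x}$, not quasi-monomial. The paper explicitly points out that it does not know how to write down such a ray and instead interpolates \emph{graded sequences of ideals}: it sets $\fb_{m,t}=\sum_{i=0}^m \fa_{m-i}(v_0)\cap\fa_i(tv)$, proves a new superadditivity bound $\lct(\fa_\bullet\boxplus\fb_\bullet)\le\lct(\fa_\bullet)+\lct(\fb_\bullet)$ via the summation formula for multiplier ideals (Theorem \ref{t-lctsum}), computes $\mult(\fb_{\bullet,t})$ (Lemma \ref{l-multiplicity}), and uses Liu's identity $\hvol(x,X,\Delta)=\inf_{\fa_\bullet}\lct(\fa_\bullet)^n\mult(\fa_\bullet)$ so that the minimizing property can be tested against ideal sequences rather than valuations. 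Without some substitute of this kind, your ``differentiate along a ray of valuations'' step has no object to differentiate.

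Second, the rigidity step: you reduce to quasi-monomial minimizers and invoke a ``mixed-volume identity'' together with a \emph{strict} Brunn--Minkowski-type concavity of $\vol^{1/n}$ on a cone of quasi-monomial valuations, with a characterization of the equality case forcing proportionality of the filtrations. Such a strict concavity statement with equality analysis is not available at the required generality, and you yourself flag it as the crux; assuming it is assuming the theorem. The paper avoids it entirely (and avoids quasi-monomiality altogether): from K-semistability of $v_0$ and the asymptotics of the minimal vanishing orders $w_m$ (Lemma \ref{l-asymptotic of w_m}) it gets $1\ge S(v_0;v)\ge(\vol(v_0)/\vol(v))^{1/n}$, and in the equality case a direct counting argument shows $\vol(v_0)=\vol(v)=\mult\bigl(\fab(v_0)\cap\fab(v)\bigr)$; an elementary but nontrivial lemma (Lemma \ref{l-valuation equal}, strengthening a result of Li--Xu) then shows that this multiplicity coincidence already forces $v=v_0$. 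So the gap is concrete: you need either to supply the ideal-sequence interpolation plus lct bound for the derivative step, and an equality-case mechanism such as Lemma \ref{l-valuation equal} for the rigidity step, or to prove the strict Brunn--Minkowski rigidity you are assuming --- which is not in the literature and is not needed once the paper's route is taken.
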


We remark that our proof of the theorem does not rely on the fact that the minimizer is quasi-monomial. 

An immediate consequence is the following, which is the local version of the K-semistable case of \cite{Zhu20}*{Theorem 1.1}.

\begin{cor} \label{c-Ginvariant}
If a klt singularity $x\in (X,\Delta)$ admits a group $G$-action, then any minimizer $v_0$ of $\hvol_{X,\Delta}$
is $G$-invariant.
\end{cor}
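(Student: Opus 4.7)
The plan is to deduce Corollary~\ref{c-Ginvariant} directly from Theorem~\ref{t-mainunique} together with two elementary equivariance properties of the normalized volume. First, I will observe that the $G$-action on $(X,\Delta)$ fixing $x$ induces a natural action on $\Val_{X,x}$ via $(g\cdot v)(f):=v(g^{*}f)$, under which both the log discrepancy $A_{X,\Delta}$ and the local volume $\vol$ are invariant, since $G$ acts by automorphisms of the pair. Consequently the normalized volume function $\hvol_{X,\Delta}$ is $G$-invariant on $\Val_{X,x}$, so whenever $v_{0}$ is a minimizer, so is $g\cdot v_{0}$ for every $g\in G$.

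Next, I will apply the uniqueness statement of Theorem~\ref{t-mainunique} to the two minimizers $v_{0}$ and $g\cdot v_{0}$: it yields, for each $g\in G$, a positive scalar $c(g)\in \bR_{>0}$ with
\[
g\cdot v_{0} \;=\; c(g)\, v_{0}.
\]
The final step is to pin down $c(g)$. Since rescaling a valuation by $c$ multiplies its log discrepancy by $c$, while $A_{X,\Delta}$ is itself $G$-invariant, we get
\[
A_{X,\Delta}(v_{0}) \;=\; A_{X,\Delta}(g\cdot v_{0}) \;=\; c(g)\,A_{X,\Delta}(v_{0}).
\]
Because $v_{0}$ is a nontrivial real valuation centered at the closed point $x$ of the klt pair $(X,\Delta)$, one has $A_{X,\Delta}(v_{0})\in (0,\infty)$, and therefore $c(g)=1$, i.e.\ $g\cdot v_{0}=v_{0}$.

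Honestly, there is no genuine obstacle: the whole corollary reduces to Theorem~\ref{t-mainunique} plus the tautological fact that both $A_{X,\Delta}$ and $\vol$ are intrinsic invariants of the pair and therefore respect any automorphism group action. The only small subtlety worth flagging is to use an invariant like $A_{X,\Delta}$ that is homogeneous of \emph{positive} degree under rescaling, so that the constant $c(g)$ is genuinely detected; the local volume $\vol$ is homogeneous of degree $-n$ and would serve equally well. No case analysis, compactness argument, or appeal to quasi-monomiality of the minimizer is needed.
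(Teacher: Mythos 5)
Your proof is correct and follows essentially the same route as the paper: both arguments observe that $g\cdot v_0$ is again a minimizer, invoke Theorem~\ref{t-mainunique} to write $g\cdot v_0=\lambda v_0$ with $\lambda>0$, and then use the $G$-invariance and positive homogeneity of $A_{X,\Delta}$ (with $0<A_{X,\Delta}(v_0)<\infty$) to force $\lambda=1$.
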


Another direct consequence is the finite degree formula for normalized volumes. 

\begin{thm}[Finite degree formula]\label{t-finitedegformula}
Let $f\colon \big(y\in (Y,\Delta_Y)\big)\to \big(x\in(X,\Delta)\big)$ be a finite Galois morphism between klt singularities such that $f^*(K_X+\Delta)=K_Y+\Delta_Y$. Then 
$$\hvol(x,X,\Delta)\cdot \deg(f)=\hvol(y,Y,\Delta_Y). $$
\end{thm}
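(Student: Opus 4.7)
The plan is to combine the uniqueness of minimizers from Theorem~\ref{t-mainunique} (via its $G$-invariance consequence Corollary~\ref{c-Ginvariant}) with a direct pushforward formula for normalized volumes under the Galois cover.

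Let $G$ be the Galois group of $f$, acting on $(Y, y)$ with quotient $(X, x)$. By Corollary~\ref{c-Ginvariant} applied to this $G$-action on $(Y, \Delta_Y)$, the minimizer $w_0$ of $\hvol_{Y, \Delta_Y}$ is $G$-invariant. Hence $w_0$ is the unique extension to $K(Y)$ of the valuation $v_0 := w_0|_{K(X)}$ on $K(X)$ centered at $x$, and reciprocally every $v \in \Val_{X, x}$ admits a canonical $G$-invariant lift to $K(Y)$.

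The heart of the argument is the pushforward identity
\[
\hvol_{Y,\Delta_Y}(w) = \deg(f) \cdot \hvol_{X,\Delta}(v)
\]
for every $G$-invariant valuation $w$ on $K(Y)$ with restriction $v = w|_{K(X)}$. Split this into its log-discrepancy and volume factors. The identity $A_{Y, \Delta_Y}(w) = A_{X, \Delta}(v)$ is standard given the crepant hypothesis $f^*(K_X+\Delta) = K_Y + \Delta_Y$, proved by pulling back log resolutions. For the volume identity $\vol_Y(w) = \deg(f) \cdot \vol_X(v)$: both $\cO_{X,x}$ and $\cO_{Y,y}$ are Cohen--Macaulay (klt implies CM), so by miracle flatness $\cO_{Y,y}$ is a free $\cO_{X,x}$-module of rank $\deg(f)$. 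This gives $\ell_Y(\cO_Y/\fa_m(v)\cO_Y) = \deg(f) \cdot \ell_X(\cO_X/\fa_m(v))$. One then compares $\fa_m(v)\cO_Y$ to the genuine valuation ideal $\fa_m(w)$: the inclusion $\fa_m(v)\cO_Y \subseteq \fa_m(w)$ is immediate, and the $G$-invariance of $w$ should force both graded sequences to define the same valuation $w$, and hence to have the same asymptotic Hilbert--Samuel multiplicity. Dividing by $m^n/n!$ and passing to the limit yields the volume identity.

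With the pushforward identity at hand, applying it to $w_0, v_0$ gives
\[
\hvol(y, Y, \Delta_Y) = \hvol_{Y,\Delta_Y}(w_0) = \deg(f) \cdot \hvol_{X,\Delta}(v_0) \geq \deg(f) \cdot \hvol(x, X, \Delta),
\]
while for any $v \in \Val_{X, x}$, lifting to a $G$-invariant $w$ on $K(Y)$ and applying the formula yields $\hvol(y, Y, \Delta_Y) \leq \hvol_Y(w) = \deg(f) \cdot \hvol_X(v)$, so taking the infimum over $v$ gives the reverse inequality. The principal technical obstacle is the volume half of the pushforward formula: while the free-module length identity for $\fa_m(v)\cO_Y$ is immediate, matching its asymptotic Hilbert--Samuel coefficient with that of $\fa_m(w)$---so that the full factor of $\deg(f)$ survives to the level of valuation volumes---requires careful use of the $G$-invariance of $w$ together with the theory of integral closure of graded sequences of ideals.
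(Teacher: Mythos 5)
Your first step coincides with the paper's: by Corollary \ref{c-Ginvariant} the minimizer of $\hvol_{Y,\Delta_Y}$ is $G$-invariant, so the problem reduces to comparing $G$-invariant valuations on $Y$ with valuations on $X$. At that point the paper simply invokes \cite{LX-cubic-3fold}*{Theorem 2.7(1)}, which gives $\hvol^G(y,Y,\Delta_Y)=|G|\cdot\hvol(x,X,\Delta)$; the proof there works with the ideal-theoretic characterization \eqref{e-hvol=lct^n*mult}, pulling back and contracting $G$-invariant graded sequences of ideals, for which $\lct$ is preserved under the crepant finite morphism and $\mult$ scales by $\deg f$. You instead try to establish the identity directly at the level of valuation volumes, $\vol_Y(w)=\deg(f)\cdot\vol_X(v)$ for $G$-invariant $w$ with $v=w|_{K(X)}$, and this is where the argument has real problems (the log discrepancy identity $A_{Y,\Delta_Y}(w)=A_{X,\Delta}(v)$ is indeed standard).

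Two concrete gaps. First, miracle flatness does not apply: it requires a regular base, and $X$ is only klt, so $\cO_{Y,y}$ is in general not flat, let alone free, over $\cO_{X,x}$; already for $\bC^2\to\bC^2/\{\pm 1\}$ one has $\bC[x,y]=R\oplus M$ over the invariant ring $R$ with $M$ reflexive but not free. The asymptotic statement you want, $\ell(\cO_Y/\fa_m(v)\cO_Y)\sim\deg(f)\cdot\ell(\cO_X/\fa_m(v))$, is still true, but only via rank-additivity of multiplicities for the torsion-free $\cO_{X,x}$-module $\cO_{Y,y}$ of generic rank $\deg f$, not via freeness. Second, and more seriously, the step you flag as a technical obstacle is the actual mathematical content: the inclusion $\fa_m(v)\cO_Y\subseteq\fa_m(w)$ only yields $\vol_Y(w)\le\deg(f)\cdot\vol_X(v)$, and the reverse inequality, equivalently $\mult(\fab(v)\cO_Y)=\mult(\fab(w))$, does not follow from the two graded sequences ``defining the same valuation'': graded sequences on which $w$ takes the same asymptotic values can have different multiplicities (compare $\fa_m(w)$ with $\fa_1(w)^m$), so no general principle of this kind is available. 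This missing inequality for $G$-invariant $w$ is precisely what the cited theorem of Liu--Xu supplies, and its proof circumvents the issue by working with $G$-invariant ideal sequences and the formula $\hvol=\inf\lct^n\cdot\mult$ rather than with volumes of valuations. A further inaccuracy: an arbitrary $v\in\Val_{X,x}$ need not admit a $G$-invariant lift, since its extensions to $K(Y)$ form a single $G$-orbit which may contain more than one element; for the ``$\le$'' half of your final comparison you should therefore argue via pulled-back ideal sequences (or via an arbitrary, not necessarily invariant, extension of $v$), not via a canonical invariant lift.
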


Here $\hvol(x,X,\Delta)$ denotes the volume of the klt singularity $x\in (X,\Delta)$, see Definition \ref{d-normvol}. We apply this to obtain the following effective bound of the local fundamental group. 

\begin{cor}\label{c-localfund}
Let $x\in (X,\Delta)$ be the germ of a klt singularity, then the order of the fundamental group of the smooth locus satisfies
$$\#|\pi_1(x,X^{\rm sm})|\le \frac{n^n}{\hvol(x,X,\Delta)},$$
where the equality holds if and only if $\Delta=0$ and $x\in X$ is \'etale locally isomorphic to $\bC^n/G$ where the action of $G\cong \pi_1(x,X^{\rm sm})$ is fixed point free in codimension one. 
\end{cor}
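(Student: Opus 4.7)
The plan is to combine the finite degree formula just established (Theorem \ref{t-finitedegformula}) with the universal upper bound $\hvol(y,Y,\Delta_Y)\le n^n$ that holds for any $n$-dimensional klt singularity $y\in(Y,\Delta_Y)$, where equality characterizes smooth points with trivial boundary. This bound (due to Liu--Xu) is a standard input independent of the uniqueness results of the present paper.

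Given any finite quotient $G$ of $\pi_1(x,X^{\rm sm})$, I would produce a finite Galois morphism $f\colon (y,Y,\Delta_Y)\to (x,X,\Delta)$ of degree $|G|$ that is étale in codimension one, by taking the integral closure of $X$ in the function field of the finite étale $G$-cover of $X^{\rm sm}$ near $x$ (the branch locus is forced into $X\setminus X^{\rm sm}$, which has codimension at least two since $X$ is klt). The ramification formula then gives $f^*(K_X+\Delta)=K_Y+\Delta_Y$ for a canonical boundary $\Delta_Y$, and $(Y,\Delta_Y)$ is klt. Applying Theorem \ref{t-finitedegformula} yields
$$
\hvol(y,Y,\Delta_Y) \;=\; |G|\cdot \hvol(x,X,\Delta),
$$
and combining with $\hvol(y,Y,\Delta_Y)\le n^n$ gives $|G|\le n^n/\hvol(x,X,\Delta)$. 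Since the bound is uniform over all finite quotients $G$, the profinite group $\pi_1(x,X^{\rm sm})$ is itself finite of order at most $n^n/\hvol(x,X,\Delta)$.

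For the equality case, take $G=\pi_1(x,X^{\rm sm})$: saturation of the bound forces $\hvol(y,Y,\Delta_Y)=n^n$, so by the characterization of the universal bound, $(Y,\Delta_Y)$ is smooth at $y$ with $\Delta_Y=0$. Thus $Y$ is étale locally $\bC^n$ near $y$, and $X$ is étale locally the quotient $\bC^n/G$ by the $G$-action induced from $f$; the ramification identity $f^*(K_X+\Delta)=K_Y$ combined with $f$ being étale in codimension one forces $\Delta=0$, and ``étale in codimension one'' for $f$ is precisely the condition that $G$ acts fixed-point free in codimension one.

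The only nontrivial external inputs beyond the finite degree formula are the universal bound $\hvol\le n^n$ (with its equality characterization) and the standard fact that finite étale covers of $X^{\rm sm}$ extend to finite covers of $X$ étale in codimension one with klt total space. Neither is a serious obstacle: the substantive analytic work has been absorbed into Theorem \ref{t-finitedegformula}, which itself rests on the uniqueness of the normalized volume minimizer (Theorem \ref{t-mainunique}).
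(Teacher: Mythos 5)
Your proposal is correct and its engine is exactly the paper's: combine Theorem \ref{t-finitedegformula} with the universal bound $\hvol(y,Y,\Delta_Y)\le n^n$ and its equality characterization from \cite{LX-cubic-3fold}*{Theorem A.4}, applied to a cover of $X$ that is \'etale in codimension one, with $\Delta_Y=f^*\Delta$, $K_Y+\Delta_Y=f^*(K_X+\Delta)$ and the equality analysis you describe.

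The one point where you diverge is the finiteness of $\pi_1(x,X^{\rm sm})$. The paper simply quotes it (\cite{Xu14}, \cite{Braun-local-pi_1}) and then passes directly to the universal cover of $X^{\rm sm}$, whereas you bound every finite quotient $G$ via the cover obtained by integral closure and then deduce finiteness from the uniform bound. Be careful about what that last step actually proves: bounding all finite quotients by $N$ only bounds the profinite completion. For a profinite group this does imply finiteness (a maximal finite quotient is then the whole group), so if $\pi_1(x,X^{\rm sm})$ is understood as the \'etale local fundamental group --- the natural reading over a general algebraically closed field of characteristic $0$ --- your bootstrap is legitimate, and it is a pleasant byproduct that the finite degree formula then reproves (effectively) the finiteness of the \'etale local fundamental group, so the citation of \cite{Xu14} becomes optional. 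If instead one means the topological fundamental group of the smooth locus of the analytic germ over $\bC$, an infinite group can have all finite quotients bounded (or none at all), so your argument does not by itself yield finiteness and you still need \cite{Braun-local-pi_1}, exactly as the paper does. Apart from this caveat, the construction of the Galois cover \'etale in codimension one, the use of the ramification formula to get a klt pair $(Y,\Delta_Y)$ with $K_Y+\Delta_Y=f^*(K_X+\Delta)$, and the equality analysis (smoothness of $Y$ at $y$, $\Delta_Y=0$, hence $X$ \'etale locally $\bC^n/G$ with $G$ acting freely in codimension one and $\Delta=0$) match the paper's proof.
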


Combining Corollary \ref{c-localfund} with the results from \cites{Liu18,BJ20} relating local and global volumes of Fano varieties, we also have the following theorem. 

\begin{thm}\label{t-Cartier}
Let $(X,\Delta)$ be a log Fano variety. Then for any $x\in (X,\Delta)$, if we denote by $\pi^{\rm loc}_1(x,X^{\rm sm})$ the local fundamental group of the smooth locus of the germ $x\in (X,\Delta)$,  we have the inequality
$$\#|\pi^{\rm loc}_1(x,X^{\rm sm})|\le \frac{(n+1)^n}{\delta(X,\Delta)^n\cdot \big(-(K_X+\Delta)\big)^n}.$$
In particular, the Cartier index of $X$ is bounded from above by the right hand side of the above inequality.
\end{thm}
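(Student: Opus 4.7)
The plan is to combine Corollary \ref{c-localfund} with the local-global volume estimate for log Fano pairs from \cite{Liu18} and \cite{BJ20}. The relevant form of that estimate asserts that for every closed point $x$ in a log Fano pair $(X,\Delta)$ of dimension $n$,
$$\hvol(x,X,\Delta) \;\geq\; \left(\frac{n}{n+1}\right)^n \delta(X,\Delta)^n \cdot \bigl(-(K_X+\Delta)\bigr)^n.$$
Liu proves the $\delta=1$ (K-semistable) case by testing valuations at $x$ against a general member of $|-(K_X+\Delta)|_{\mathbb{Q}}$, and Blum--Jonsson's valuative description of the stability threshold refines this to arbitrary $\delta$ by tracking the optimal basis-type divisor.

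First I would quote this inequality, then plug it into Corollary \ref{c-localfund} to obtain
$$\#|\pi_1^{\rm loc}(x,X^{\rm sm})| \;\leq\; \frac{n^n}{\hvol(x,X,\Delta)} \;\leq\; \frac{(n+1)^n}{\delta(X,\Delta)^n \cdot \bigl(-(K_X+\Delta)\bigr)^n},$$
which is exactly the stated bound. For the Cartier index statement, I would invoke the classical index-one cover: if $r$ is the Cartier index of $K_X$ at a point $x$, then the cyclic cover $\tilde X \to X$ of a neighborhood of $x$ has degree $r$ and is étale over the smooth locus, exhibiting $\mathbb{Z}/r\mathbb{Z}$ as a quotient of $\pi_1^{\rm loc}(x,X^{\rm sm})$. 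Hence $r \leq \#|\pi_1^{\rm loc}(x,X^{\rm sm})|$, and taking the supremum over $x \in X$ controls the global Cartier index.

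There is no genuine obstacle here beyond assembling known inputs: the substantive work sits in Theorem \ref{t-mainunique} and its consequence Corollary \ref{c-localfund} (which in turn rests on the finite degree formula, Theorem \ref{t-finitedegformula}). The only points that require care are (a) pinning down the correct normalization of the Liu--Blum--Jonsson inequality, with the factor $(n/(n+1))^n \delta(X,\Delta)^n$ and not a looser constant, and (b) verifying that the index-one cover argument goes through in the presence of the boundary $\Delta$, both of which are standard.
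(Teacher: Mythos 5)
Your proposal is correct and follows the paper's own route: the paper likewise just quotes \cite{BJ20}*{Theorem D} for the bound $\hvol(x,X,\Delta)\ge \left(\tfrac{n}{n+1}\right)^n\delta(X,\Delta)^n\cdot\left(-(K_X+\Delta)\right)^n$ and combines it with Corollary \ref{c-localfund}. Your explicit index-one-cover argument for the Cartier index claim is the standard justification that the paper leaves implicit, and it is fine as stated.
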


Here $\delta(X,\Delta)$ denotes the stability threshold of the log Fano pair $(X,\Delta)$, see \cite{FO18}*{Definition 0.2} or \cite{BJ20}.

\begin{rem}
An interesting application of Theorem \ref{t-Cartier} is that it gives a new proof of the boundedness of K-semistable Fano varieties of a fixed dimension and with volume bounded from below. This was originally proved in \cite{Jia17} as a consequence of the boundedness results proved in \cite{Bir19}. Applying Theorem \ref{t-Cartier}, we only need the fact the Fano varieties with fixed Cartier index form a bounded family, which was first proved in \cite{HMX14}*{Corollary 1.8}.
\end{rem}

\subsection{Outline of the proof}
Given a klt singularity $x\in (X=\Spec(R),\Delta)$, the uniqueness of the minimizer $v$ (up to rescaling) of $\hvol_{X,\Delta}$ is proved in \cite{LX18} {\it under the assumption} that the graded rings associated to the minimizers are finitely generated. The finite generation assumption is used to give a degeneration of the singularity $(X,\Delta)$  to a {\it K-semistable} log Fano cone $(X_0,\Delta_0,\xi_v)$, where $X_0=\Spec({\rm gr}_v(R))$, $\Delta_0$  the degeneration of $\Delta$, and $\xi_v$ is the Reeb vector induced by $v$. This degeneration picture allows one to degenerate any minimizer to $X_0$, and use the strict convexity of the volume function to conclude that $\xi_v$ is the unique $T$-equivariant minimizer on $(X_0,\Delta_0)$ (see \cite{Xu-ICM}*{Page 823}).  

The main aim of this paper is to prove uniqueness of the minimizer without assuming the finite generation property, which still remains a major challenge. For this purpose, a key new input, introduced in Section \ref{ss-ksemi}, is the \emph{K-semistability} of a general valuation $v_0\in \Val_{X,x}$ centered at a klt singularity $x\in (X,\Delta)$. More generally, we will define the stability threshold $\delta(v_0)$ of a valuation $v_0$ with finite log discrepancy. This is done by introducing a local version of basis type divisors. Roughly speaking, a basis type divisor with respect to the chosen valuation $v_0$ is (up to a suitable rescaling factor) a divisor of the form $\{f_1=0\}+\cdots+\{f_N=0\}$ where the images of $f_i$ form a basis of $\cO_{X,x}/\fa_m(v_0)$ (for some integer $m$; here $\fab(v_0)$ denotes the valuation ideals) that is compatible with the filtration induced by $v_0$. Given another valuation $v\in \Val_{X,x}$, we apply the key technical observation from \cite{AZ20} to find basis type divisors that are compatible with both $v_0$ and $v$. This allows us to define the $S$-invariant and $\delta$-invariant of a valuation $v_0$ with respect to another valuation $v$ and to eventually define the local analogue of the stability notions from the global setting. To justify our definition, when $v_0$ is given by a Koll\'ar component $S$, we will show that $\ord_S$ is K-semistable as a valuation if and only if $(S,\Delta_S)$ is K-semistable as a log Fano pair (see Theorem \ref{t-kollarcomponent}). 


With these new definitions, in the second step we show in Section \ref{ss-kimpliesm} that a K-semistable valuation is always a minimizer, and up to scaling there is a unique K-semistable valuation. The observation here is that the log canonical thresholds (lct) of basis type divisors with respect to a K-semistable valuation $v_0$ is asymptotically computed by $v_0$. On the other hand, the asymptotic expected vanishing order of these basis type divisors along a valuation $v$ is at least $\vol_{X,\Delta}(v)^{-1/n}$, with equality when $v=v_0$. Through the identity
\[
\hvol_{X,\Delta}(v)^{1/n}=\frac{A_{X,\Delta}(v)}{\vol_{X,\Delta}(v)^{-1/n}},
\]
minimizing the normalized volume $\hvol_{X,\Delta}(v)$ can be thought of as finding valuations that compute the lct of basis type divisors. In particular, this implies that K-semistable valuations are minimizers of $\hvol_{X,\Delta}$ and the uniqueness then follows from an analysis of the equality condition. 


In the last step, we show that a minimizing valuation $v_0$ is always K-semistable in Section \ref{ss-mimpliesk}. To circumvent the finite generation assumption of ${\rm gr}_{v_0}R$ in \cite{LX18}, we will generalize the derivative argument from \cite{Li17}. Intuitively, given two valuations $v_0,v\in\Val_{X,x}$, we would like to draw a ray between them in the valuation space and use the nonnegativity of the derivative of $\hvol_{X,\Delta}$ at the minimizer $v_0$ to prove its K-semistability. When $v_0$ and $v$ are quasi-monomial with respect to a common stratum, a natural candidate is given by the line joining them in the corresponding dual complex. However, it is unclear to us how to write down such a ray in general. Our idea is to instead construct a family of graded sequences of ideals that interpolates the valuation ideals of the two given valuations. Combining the derivative formula from \cite{Li17} and an analysis of the log canonical thresholds and multiplicities of these ``mixed'' ideal sequences, we can then show that if $v_0$ is a minimizer, then $\delta(v_0)\ge 1$, i.e. $v_0$ is K-semistable. 


\medskip

\noindent {\bf Acknowledgement}: We want to thank Yuchen Liu for discussions, especially for showing us the preprint \cite{Liu20}. We also would like to thank Harold Blum for helpful comments.

\section{Preliminaries}

\noindent{\bf Notation and Conventions:} We follow the notation as in \cites{KM98,Laz-positivity-2,Kol13}.

We say $x\in (X=\Spec(R),\Delta)$ is a singularity if $R$ is a local ring of essentially finite type over $k$, $\Delta$ is an effective divisor on $X$ and $x\in X$ is the unique closed point.

A filtration $\cF^{\bullet}$ on a finite dimensional vector space $V$ is a decreasing sequence $\cF^{t}V$ $(t\in \bR)$ of subspaces satisfying $\cF^{t}V\subseteq \cF^{t'}V$ whenever $t\ge t'$. It is called an $\bN$-filtration if $\cF^0 V=V$ and $\cF^t V=\cF^{\lceil t \rceil} V$ for all $t\in \bR$. For any filtration $\cF$ on $V$, we define its induced $\bN$-filtration $\cF_{\bN}^{\bullet}$ by setting $\cF_{\bN}^{t}V:=\cF^{\lceil t \rceil} V$.

A projective klt pair $(X,\Delta)$ is called a log Fano pair if $-K_X-\Delta$ is ample. 

\subsection{Graded sequence of ideals}

Let $(R,\fm)$ be a local ring of essentially finite type over $k\cong R/\fm$. A graded sequence of ideals (see \cite{JM12}) is a sequence of ideals $\fa_\bullet=(\fa_m)_{m\in \bN}$ such that $\fa_m\cdot \fa_n\subseteq \fa_{m+n}$. We call it \emph{decreasing} if $\fa_{m+1}\subseteq \fa_m$ for all $m\in\bN$. A graded sequence $\fb_{\bullet}$ of ideals is said to be \emph{linearly bounded by} another one $\fa_{\bullet}$, if there is a positive integer $C$ such that
such that
\[
\fb_{Cm}\subset \fa_{m}
\]
for any $m\in\bN$. A finite subset $\{f_1,...,f_N\}$ of $R\setminus \{0\}$ is said to be \emph{compatible with} a decreasing graded sequence $\fab$ of ideals if for all $m\in\bN$, the nonzero images $\bar{f}_i$ of $f_i$ in $R/\fa_m$ are linearly independent. 


The following lemma is a local version of \cite{AZ20}*{Lemma 3.1}.

\begin{lem}\label{l-compatiblebasis}
Let $(R,\fm)$ be a local ring of essentially finite type over $k\cong R/\fm$, let $\fab$ and $\fbb$ be two decreasing graded sequences of $\fm$-primary ideals and let $m\in\bN$. Then there exist some $f_i\in R\setminus \{0\}$ $(1\le i \le N)$ whose images in $R/\fa_m$ form a basis such that $\{f_1,...,f_N\}$ is compatible with both $\fab$ and $\fbb$.
\end{lem}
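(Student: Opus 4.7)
The plan is to translate the question into linear algebra on the finite-dimensional quotient $V := R/\fa_m$ (finite-dimensional because $\fa_m$ is $\fm$-primary) and then invoke a standard two-filtration argument. The graded sequences $\fab$ and $\fbb$ induce decreasing filtrations
\[
F^t V := (\fa_t+\fa_m)/\fa_m, \qquad G^s V := (\fb_s+\fa_m)/\fa_m
\]
on $V$, and the task becomes to produce a basis $\{\bar f_1,\dots,\bar f_N\}$ of $V$ that is simultaneously compatible with both filtrations, i.e.\ such that for each $t$ (resp.\ $s$) the subset of basis elements lying in $F^t V$ (resp.\ $G^s V$) is a basis of that subspace.

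The main technical step is the existence of such a basis, which I would establish by induction on $\dim V$. Let $t_0$ be maximal with $F^{t_0} V\neq 0$; pick a basis of $F^{t_0} V$ compatible with the restriction of $G^\bullet$, pass to the quotient $V/F^{t_0} V$ with its induced filtrations, apply the induction hypothesis, and lift each new basis element with its $F$- and $G$-levels jointly preserved. The joint lifting rests on the elementary identity $F^t V \cap (G^s V + F^{t_0} V) = (F^t V \cap G^s V) + F^{t_0} V$ for $t \le t_0$. This step is essentially the local-ring analogue of \cite{AZ20}*{Lemma 3.1}, and I expect it to go through with only cosmetic changes; the hard part of the whole proof lives here.

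Once the compatible basis $\{\bar f_i\}$ is constructed, I would lift back to $R$ with care for the $\fbb$-level: setting $\sigma(i) := \max\{s : \bar f_i \in G^s V\}$, I choose a representative $f_i \in \fb_{\sigma(i)}$, which is possible because $\bar f_i \in (\fb_{\sigma(i)}+\fa_m)/\fa_m$. Compatibility with $\fab$ is then immediate for $t > m$ (since $\fa_t \subseteq \fa_m$ forces every linear dependence mod $\fa_t$ to descend to one mod $\fa_m$) and for $t \le m$ it follows from the $F$-compatibility of $\{\bar f_i\}$, as the $\bar f_i \in F^t V$ actually lie in $\fa_t$ while the remaining ones descend to a basis of $V/F^t V = R/\fa_t$. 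Compatibility with $\fbb$ uses that any $f_i$ with $\sigma(i) \ge s$ lies in $\fb_s$ by construction, and that any linear relation among the remaining images in $R/\fb_s$ pulls back to an element of $G^s V$ expressed as a combination of the $\bar f_i$ with $\sigma(i) < s$, contradicting $G$-compatibility. Apart from the existence of the simultaneous basis, all remaining steps are routine bookkeeping.
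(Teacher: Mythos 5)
Your proof is correct and takes essentially the same route as the paper: pass to $V=R/\fa_m$ with the two induced filtrations, choose a basis of $V$ compatible with both (the paper simply invokes \cite{AZ20}*{Lemma 3.1} for this linear-algebra step, which you instead sketch by induction on $\dim V$), lift each basis vector into $\fb_{\sigma(i)}$, and observe that compatibility with $\fab$ comes for free from the $F$-compatibility of the chosen basis. The extra bookkeeping you spell out (the cases $t\le m$ and $t>m$, and the pull-back of relations modulo $\fb_s$ to $G^sV$) is exactly what the paper leaves implicit, so there is no substantive difference.
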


\begin{proof}
Let $V:=R/\fa_m$ which is a finite dimensional linear space. Then $V$ has two filtrations given by
$$\cF_{\fab}^r V :=(\fa_r+\fa_m)/\fa_m \quad\mbox{and} \quad \cF_{\fbb}^s V:=(\fb_s+\fa_m)/\fa_m.$$
By \cite{AZ20}*{Lemma 3.1}, there exists a basis $\bar{f}_i$ ($1\le i\le N$) of $V$ that is compatible with both filtrations $\cF_{\fab}$ and $\cF_{\fbb}$. We can lift each $\bar{f}_i$ to some element $f_i\in R$ such that $\{f_1,\cdots,f_N\}$ is compatible with $\fbb$ (it suffices to lift each $\bar{f}_i \in \cF_{\fbb}^s V\setminus \cF_{\fbb}^{s+1} V$ to some $f_i\in\fb_s$). On the other hand, since $\bar{f}_i$ is compatible with $\cF_{\fab}$, any such lift is automatically compatible with $\fab$ (i.e. for all $r\le m$, $f_i\in \fa_r$ if and only if $\bar{f}_i\in \cF^r_{\fab} V$).
\end{proof}

\subsection{The space of valuations}

\subsubsection{Valuations}

Let $X$ be a variety defined over $k$. A \emph{real valuation} of its function field
$K(X)$ is a non-constant map $v\colon K(X)^*\to \bR$, satisfying:
\begin{itemize}
 \item $v(fg)=v(f)+v(g)$;
 \item $v(f+g)\geq \min\{v(f),v(g)\}$;
 \item $v(k^*)=0$.
\end{itemize}
We set $v(0)=+\infty$. A valuation $v$ gives rise
to a valuation ring 
$$\cO_v:=\{f\in K(X)\mid v(f)\geq 0\}.$$
We say a valuation $v$ is \emph{centered at} a scheme-theoretic
point $x=c_X(v)\in X$ if we have a local inclusion 
$\cO_{X,x}\hookrightarrow\cO_v$ of local rings.
Notice that the center of a valuation, if exists,
is unique since $X$ is separated. Denote by $\Val_X$ 
the set of real valuations of $K(X)$ that admits a center
on $X$. For a closed point $x\in X$, we further denote by $\Val_{X,x}$ the set
of real valuations of $k(X)$ centered at $x\in X$.  

For each valuation $v\in \Val_{X,x}$ and any positive integer $m$, we define the valuation ideal 
$$\fa_m(v):=\{f\in\cO_{X,x}\mid v(f)\geq m\}.$$
It is clear that $\fa_{\bullet}=\{\fa_m\}_{m\in \bN}$ form a decreasing graded sequence of $\fm_x$-primary ideals.

Let $(X,\Delta)$ be a pair. We denote by
\[
A_{X,\Delta}\colon \Val_X\to \bR \cup\{+\infty\}
\]
the \emph{log discrepancy function of valuations} as in \cite{JM12} and \cite{BdFFU15}*{Theorem 3.1} which extends the standard definition of log discrepancies from divisors to all valuations in $\Val_{X}$. It is possible that $A_{X,\Delta}(v) = +\infty$ for some $v\in \Val_X$, see e.g. \cite{JM12}*{Remark 5.12}. We denote by $\Val^*_X$ the set of valuations $v\in\Val_X$ with $A_{X,\Delta}(v)<+\infty$ and set $\Val^*_{X,x}=\Val^*_X\cap \Val_{X,x}$ for a closed point $x\in X$. Note that $A_{X,\Delta}$ is strictly positive on $\Val_{X}$ if and only if $(X,\Delta)$ is klt.

\begin{prop} \label{p-Izumi}
Let $x\in (X,\Delta)$ be a klt singularity and let $v_0,v_1\in \Val^*_{X,x}$. Then the graded sequences $\fab(v_0)$ and $\fab(v_1)$ of valuation ideals are linearly bounded by each other.
\end{prop}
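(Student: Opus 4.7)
The plan is to reduce linear boundedness of the two valuation ideal sequences to a direct two-sided comparison of the valuations $v_0$ and $v_1$ as functions on $\cO_{X,x}\setminus\{0\}$, and to obtain that comparison by passing through a common reference, namely the $\fm_x$-adic order $\ord_x$. Concretely, once positive constants $c,C>0$ are in hand with $c\cdot v_0(f)\le v_1(f)\le C\cdot v_0(f)$ for every nonzero $f\in\cO_{X,x}$, a quick unwinding gives $\fa_{Cm}(v_1)\subseteq\fa_m(v_0)$, since $f\in\fa_{Cm}(v_1)$ forces $v_0(f)\ge C^{-1}v_1(f)\ge m$; the reverse containment is symmetric. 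Thus the whole statement reduces to showing that an arbitrary $v\in\Val^*_{X,x}$ is linearly comparable to $\ord_x$.

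For the easy half of this comparison I would set $\lambda:=v(\fm_x)=\min_{g\in\fm_x}v(g)$, which is strictly positive because $v$ is centered at $x$. If $f\in\fm_x^k$, writing $f$ as a finite sum of $k$-fold products of elements of $\fm_x$ and using submultiplicativity of $v$ yields $v(f)\ge k\lambda$, i.e.\ $v(f)\ge\lambda\cdot\ord_x(f)$ for every nonzero $f$. The opposite direction $v(f)\le C(v)\cdot\ord_x(f)$ is a classical Izumi-type bound, and this is where the main difficulty of the proof lies: it is precisely at this point that the klt assumption on $(X,\Delta)$ together with the finiteness of the log discrepancy $A_{X,\Delta}(v)<+\infty$ are used in an essential way. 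Such an inequality has been established in various forms in \cite{JM12}, \cite{BdFFU15}, and \cite{Li18} (see also \cite{Blu18}), and I would simply invoke it.

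Combining the two halves for $v_0$ and $v_1$ and applying transitivity produces the desired two-sided linear comparison between them, and the translation into valuation ideals described in the first paragraph then finishes the argument. Everything beyond the invocation of Izumi's inequality is a formal manipulation with valuations and ideals; the only genuinely nontrivial input is the upper Izumi bound, whose proof relies on the klt singularity assumption and on a careful analysis of the log discrepancy function via models of $X$.
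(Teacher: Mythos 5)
Your proposal is correct and follows essentially the same route as the paper: both reduce the statement to the Izumi-type comparison of an arbitrary $v\in\Val^*_{X,x}$ with $\ord_{\fm_x}$ (equivalently, linear boundedness of $\fab(v)$ and $\{\fm_x^m\}_{m\in\bN}$ in both directions), citing \cite{Li18}*{Theorem 3.1} for the nontrivial bound. The only difference is cosmetic: you spell out the easy direction $v\ge v(\fm_x)\cdot\ord_x$ and the translation from the pointwise inequality $v_1\le C\, v_0$ to the ideal containment $\fa_{Cm}(v_1)\subseteq\fa_m(v_0)$, which the paper leaves implicit.
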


\begin{proof}
This is a direct consequence of the Izumi type inequalities (see e.g. \cite{Li18}*{Theorem 3.1}), which says that $\fab(v_i)$ and $\{\fm_x^m\}_{m\in\bN}$ are linearly bounded by each other.
\end{proof}

\begin{defn}[Koll\'ar Components]\label{d-kollarcomp}
Let $x\in (X,\Delta)$ be a klt singularity. A prime divisor $S$ over $(X,\Delta)$ is a {\it Koll\'ar component} if there is a birational morphism $\pi\colon Y\to X$ such that $\pi$ is an isomorphism over $X\setminus \{x\}$, $S$ is a prime divisor on $Y$, $\pi(S)=\{x\}$, $-S$ is $\bQ$-Cartier and $\pi$-ample, and $(Y,\pi^{-1}_*\Delta+S)$ is plt. The map $\pi\colon Y\to X$ is called the plt blowup associated to the Koll\'ar component $S$. By adjunction (see \cite{Kol13}*{Definition 4.2}) we may write
$$(K_Y+\pi_*^{-1}\Delta+S)|_S=K_S+\Delta_S,$$
where $(S,\Delta_S)$ is a log Fano pair. 
\end{defn}

\subsubsection{Local volumes}


\begin{defn}\label{d-vol}
Let $X$ be an $n$-dimensional normal variety and let $x\in X$ be a closed point. Following \cite{ELS03} we define the \emph{volume} of a valuation $v\in\Val_{X,x}$ as
\[
\vol(v)=\vol_{X,x}(v)=\limsup_{m\to\infty}\frac{\ell(\cO_{X,x}/\fa_m(v))}{m^n/n!}.
\]
where $\ell(\cdot)$ denotes the length of the Artinian module.
\end{defn}
Thanks to the works of \cites{ELS03, LM09, Cut13}, the above limsup is actually a limit.

\medskip

The following invariant, which was first defined in \cite{Li18}, plays a key role in our study of local stability. 

\begin{defn}[\cite{Li18}]\label{d-normvol}
Let $x\in (X,\Delta)$ be an $n$-dimensional klt singularity.
The \emph{normalized volume function of valuations} $\hvol_{(X,\Delta),x}:\Val_{X,x}\to(0,+\infty)$
is defined as
\[
  \hvol_{(X,\Delta),x}(v)=\begin{cases}
            A_{X,\Delta}(v)^n\cdot\vol_{X,x}(v), & \textrm{ if } v\in \Val^*_{X,x};\\
            +\infty, & \textrm{ if } v\notin  \Val^*_{X,x}.
           \end{cases}
\]
We often denote it by $\hvol_{X,\Delta}$ or $\hvol$ when $x\in (X,\Delta)$ is clear from the context. The \emph{volume of a klt singularity} $(x\in (X,\Delta))$ is defined as
\[
  \hvol(x, X,\Delta):=\inf_{v\in\Val_{X,x}}\hvol_{(X,\Delta),x}(v).
\]

\end{defn}

It has been known that the above infimum is indeed a minimum by \cite{Blu18} and that the minimizing valuations are always quasi-monomial by \cite{Xu20}. The study of $\hvol_{X,\Delta}$ is closely related to K-stability of log Fano pairs, guided by the so-called {\it Stable Degeneration Conjecture} as formulated in \cite{Li18}*{Conjecture 7.1} and \cite{LX18}*{Conjecture 1.2}. See \cite{LLX20} for more background. Our Theorem \ref{t-mainunique} settles one part of this conjecture. 

The following theorem from \cite{LX18} motivates some of our arguments, although we do not need it in our proof.
\begin{thm}
Let $x\in (X=\Spec (R),\Delta)$ be a klt singularity, and $v^{\rm m}$ a minimizer of $\hvol_{X,\Delta}$. Assume the associated grade ring ${\rm gr}_{v^{\rm m}}(R)$ is finitely generated. Denote by $X_0=\Spec({\rm gr}_{v^{\rm m}}(R))$ with the cone vertex $o$, $\Delta_0$ the degeneration of $\Delta$ on $X_0$, $\xi_v$ the Reeb orbit induced by $v$. Then $o\in (X_0,\Delta_0,\xi_v)$ is a K-semistable log Fano cone. 
\end{thm}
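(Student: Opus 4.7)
The plan is to use the finite generation assumption to realize $(X_0,\Delta_0)$ as a flat degeneration of $(X,\Delta)$ and then transfer the minimization property of $v^{\rm m}$ across this degeneration. Concretely, the extended Rees algebra $\bigoplus_{m\in\bZ}\fa_m(v^{\rm m})\,t^{-m}\subset R[t,t^{-1}]$ (with $\fa_m(v^{\rm m})=R$ for $m\le 0$) is finitely generated over $k[t]$ by hypothesis, so it defines a flat family $\cX\to\bA^1$ whose general fiber is $X$ and whose special fiber is $X_0=\Spec({\rm gr}_{v^{\rm m}}R)$. The natural grading gives a torus $T$-action on $X_0$ making $o$ a cone vertex with Reeb vector $\xi_v$, and $\Delta$ degenerates to a $T$-invariant divisor $\Delta_0$. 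By lower semicontinuity of log discrepancies in flat families, combined with $A_{X,\Delta}(v^{\rm m})<+\infty$, the central fiber $(X_0,\Delta_0)$ is klt near $o$. The identification $\vol_{X_0,o}(\xi_v)=\vol_{X,x}(v^{\rm m})$ (comparing $R/\fa_m(v^{\rm m})$ with $\bigoplus_{k<m}R_k$ level by level) together with $A_{X_0,\Delta_0}(\xi_v)=A_{X,\Delta}(v^{\rm m})$ yields $\hvol_{X_0,\Delta_0}(\xi_v)=\hvol_{X,\Delta}(v^{\rm m})$.

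The central step is to show that $\xi_v$ is itself a minimizer of $\hvol_{X_0,\Delta_0}$. Given any $v'\in\Val_{X_0,o}$, the graded ring ${\rm gr}_{v^{\rm m}}R$ carries two natural filtrations: the grading filtration and the one induced by $v'$. Applying Lemma~\ref{l-compatiblebasis} level by level yields, for each $m$, a basis of ${\rm gr}_{v^{\rm m}}R/\bigoplus_{k\ge m}R_k$ compatible with both filtrations; lifting these basis elements coherently to $R$ produces a decreasing graded filtration on $R$ whose associated quasi-monomial approximations $\tilde v'\in\Val_{X,x}$ satisfy
\[
A_{X,\Delta}(\tilde v')\le A_{X_0,\Delta_0}(v'),\qquad \vol_{X,x}(\tilde v')\le \vol_{X_0,o}(v'),
\]
by lower semicontinuity of log discrepancy and multiplicity across $\cX\to\bA^1$. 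The minimality of $v^{\rm m}$ on $X$ then gives
\[
\hvol_{X_0,\Delta_0}(v')\ge \hvol_{X,\Delta}(\tilde v')\ge\hvol_{X,\Delta}(v^{\rm m})=\hvol_{X_0,\Delta_0}(\xi_v),
\]
so $\xi_v$ is a minimizer on $(X_0,\Delta_0)$.

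Finally, I would invoke the equivalence, for log Fano cones, between the Reeb vector being a minimizer of $\hvol$ among $T$-equivariant valuations and K-semistability of the cone, in the spirit of Collins--Sz\'ekelyhidi and Li. Any $T$-equivariant special test configuration of $(X_0,\Delta_0,\xi_v)$ degenerates $X_0$ to a log Fano cone $(X_0',\Delta_0',\xi_v)$ carrying an enlarged torus $T'$, and the generalized Futaki invariant of the test configuration equals (up to a positive factor) $\hvol(v_0')-\hvol(\xi_v)$ for a certain $T'$-equivariant valuation $v_0'$ on $X_0'$ obtained by combining $\xi_v$ with the $\bG_m$-action of the test configuration. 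Minimality of $\xi_v$, extended across this second degeneration by another application of the lifting argument above, then forces this Futaki invariant to be nonnegative, which is the definition of K-semistability.

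The main obstacle is the lifting/specialization argument in the second paragraph: producing a genuine valuation on $X$ from a valuation on $X_0$ whose normalized volume is bounded above by that of the original. Since general limits of filtrations need not come from valuations and log discrepancies can degenerate badly, the argument requires passage to quasi-monomial (or divisorial) approximations, careful lower semicontinuity estimates for $A$ and $\vol$ along $\cX\to\bA^1$, and a compactness argument to extract the limiting valuation $\tilde v'$ with the desired inequalities.
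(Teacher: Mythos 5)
First, note that the paper does not prove this statement at all: it is quoted from \cite{LX18} precisely as motivation, and the authors stress they do not use it. So the comparison is with the Li--Xu proof, and against that your proposal has two genuine gaps. The first is the claim that $(X_0,\Delta_0)$ is klt ``by lower semicontinuity of log discrepancies in flat families.'' No such principle exists in the direction you need: klt-ness is not preserved under specialization (e.g.\ a flat family of cones over smooth cubic surfaces degenerating to the cone over an elliptic curve has klt general fiber and non-klt special fiber), and for an arbitrary valuation with finitely generated associated graded ring the central fiber $\Spec({\rm gr}_v R)$ can even fail to be normal. Establishing normality and klt-ness of $X_0$ is one of the hard points of \cite{LX18}, and it uses the minimizing property of $v^{\rm m}$ in an essential way (via approximation by Koll\'ar components and MMP-type arguments), not a general semicontinuity statement.

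The second and central gap is the lifting step: given $v'\in\Val_{X_0,o}$ you want $\tilde v'\in\Val_{X,x}$ with $A_{X,\Delta}(\tilde v')\le A_{X_0,\Delta_0}(v')$ and $\vol(\tilde v')\le\vol(v')$, again ``by lower semicontinuity.'' You acknowledge this is the main obstacle, but the problem is not merely technical: the standard degeneration mechanism (the one behind Blum--Liu semicontinuity of $\hvol$, and behind \eqref{e-hvol=lct^n*mult}) transfers ideals from the general fiber to the special fiber, with colengths preserved and $\lct$ only \emph{decreasing} under specialization; it does not produce objects on $X$ from objects on $X_0$ with the inequalities you assert, and there is no known compactness/quasi-monomial approximation that does. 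What your lifting would prove is $\hvol(o,X_0,\Delta_0)\ge\hvol(x,X,\Delta)$, which is exactly the nontrivial content of K-semistability of the cone (the opposite inequality is the general semicontinuity), so the step is essentially equivalent to the theorem itself; moreover the final appeal to ``Reeb minimizer among $T$-equivariant valuations $\Leftrightarrow$ K-semistable'' makes the argument circular, since that equivalence for Fano cones is part of the same circle of results. The actual proof in \cite{LX18} goes the other way around: it never lifts arbitrary valuations from $X_0$; instead, given a $T$-equivariant special test configuration of $(X_0,\Delta_0,\xi_v)$, the finite generation is used to build from it an explicit family (a ray) of valuations/graded ideal sequences on $X$ itself, and the derivative of the normalized volume along this ray at $v^{\rm m}$, which is nonnegative by minimality, is identified (up to a positive constant) with the generalized Futaki invariant, in the spirit of Li's derivative argument and the Collins--Sz\'ekelyhidi volume formula. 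That is the mechanism your proposal is missing.
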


Note that the finite generation assumption always holds when $v$ is a divisorial valuation by \cites{LX20, Blu18}.

\subsection{Log canonical thresholds}

\begin{defn}
Given a klt pair $(X,\Delta)$ and a non-zero ideal $\fa$ on $X$, the {\it log canonical threshold} $\lct(X,\Delta;\fa)$ of $\fa$ with respect to $(X,\Delta)$ is defined to be 
\[
    \lct(X,\Delta;\fa) = \max \{t\ge 0\,|\, (X,\Delta+\fa^t) \mbox{ is log canonical}\} = \inf_{v\in\Val^*_X} \frac{A_{X,\Delta}(v)}{v(\fa)}.
\]
\end{defn}

For a graded sequence $\fa_{\bullet}=\{\fa_m\}_{m\in \bN}$ of non-zero ideals on a klt pair $(X,\Delta)$, we can also define its log canonical threshold to be
$$\lct(X,\Delta; \fa_{\bullet}):=\limsup_m m\cdot\lct(X,\Delta; \fa_m)\in \bR_{>0}\cup \{+\infty\}.$$

It is proved in {\cite{Liu18}*{Theorem 27}} that 
\begin{equation} \label{e-hvol=lct^n*mult}
    \hvol(x,X,\Delta)=\inf_{\fa_{\bullet}} \lct(X,\Delta;\fa_{\bullet})^n\cdot \mult(\fa_{\bullet}),
\end{equation}
where the infimum runs through all graded ideal sequences $\fa_{\bullet}$ of $\fm_x$-primary ideals, and $\lct(X,\Delta;\fa_{\bullet})^n\cdot \mult(\fa_{\bullet})$ is set to be $+\infty$ if $\lct(X,\Delta;\fa_{\bullet})=+\infty$.

\section{K-semistability of a valuation}

This is the main section of this paper. We will first define the notion of K-semistability for valuations. Then we will show that for valuations, being K-semistable is the same as being minimizer of the normalized volume function, and such a K-semistable valuation is unique up to rescaling.

\subsection{Definition of K-semistability for a valuation}\label{ss-ksemi}

In this subsection, we introduce a local version of $S$-invariant on the product of the valuation space $\Val^*_{X,x}\times \Val^*_{X,x}$ and use it to define the $\delta$-invariant of valuations, which then naturally give the notion of K-semistability of a valuation. 

Let $x\in (X=\Spec(R),\Delta)$ be a klt singularity.
Fix a valuation $v_0\in \Val^*_{X,x}$. By Proposition \ref{p-Izumi}, for any valuation $v\in \Val^*_{X,x}$, the graded sequences of ideals $\fa_{\bullet}(v)$ and $\fa_{\bullet}(v_0)$ are linearly bounded by each other. By Lemma \ref{l-compatiblebasis}, for any $m\in \bN$ there exist some $f_1,...,f_{N_m}\in R$ (where $N_m=\ell(R/\fa_m(v_0))$) which are compatible with both $\fa_{\bullet}(v_0)$ and $\fa_{\bullet}(v)$ such that their images $\bar{f}_i$ form a basis of $R_m:=R/\fa_m(v_0)$. We call such $\{f_1,...,f_{N_m}\}$ an \emph{$(m,v)$-basis} (\emph{with respect to $v_0$}). The valuation $v$ induces a filtration $\cF_v$ on $R_m$ such that an element $\bar {f}\in R_m$ is contained in  $\cF^\lambda_v R_m$ ($\lambda\in\bR$) if and only if there exists a lifting $f\in R$ of $\bar{f}$ such that $v(f)\ge \lambda$. (For a similar filtration in the global setting, see \cite{BX19}*{5.1.1}).

\begin{lemdefn} \label{l-vol(v_0;v)}
The limit
\[
\vol(v_0;v):=\lim_{m\to \infty} \frac{\ell(\cF^m_v R_m)}{m^n/n!} 
\]
exists. Moreover, we have $\vol(v_0;v/t)=0$ for all $t\gg 0$.
\end{lemdefn}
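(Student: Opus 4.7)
The plan is to express $\ell(\cF^m_v R_m)$ as a difference of lengths of quotients by graded sequences of $\fm_x$-primary ideals, and then invoke the existence theorem for their multiplicities. An element of $R_m=R/\fa_m(v_0)$ lies in $\cF^m_v R_m$ exactly when it admits a lift $f\in\fa_m(v)$, so $\cF^m_v R_m=(\fa_m(v)+\fa_m(v_0))/\fa_m(v_0)$. Combining the short exact sequence
\[
0\to \cF^m_v R_m \to R_m \to R/(\fa_m(v)+\fa_m(v_0))\to 0
\]
with the standard length identity $\ell(R/(I+J))=\ell(R/I)+\ell(R/J)-\ell(R/(I\cap J))$ rewrites this as
\[
\ell(\cF^m_v R_m)=\ell\bigl(R/(\fa_m(v)\cap\fa_m(v_0))\bigr)-\ell(R/\fa_m(v)).
\]

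Next I would observe that $\cI_\bullet:=\{\fa_m(v)\cap\fa_m(v_0)\}_{m\in\bN}$ is itself a graded sequence of $\fm_x$-primary ideals: it is graded because $\cI_p\cdot\cI_q\subseteq\fa_{p+q}(v)\cap\fa_{p+q}(v_0)=\cI_{p+q}$, and each $\cI_m$ is $\fm_x$-primary because both $\fa_m(v)$ and $\fa_m(v_0)$ are (a consequence of the Izumi-type estimate underlying Proposition \ref{p-Izumi}). Since $R$ is essentially of finite type over $k$ and hence analytically unramified, the existence results of \cite{ELS03}, \cite{LM09} and \cite{Cut13} (already invoked in Definition \ref{d-vol}) imply that the two limits $\mult(\cI_\bullet)$ and $\vol(v)$ both exist. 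Subtracting then gives the existence of
\[
\vol(v_0;v)=\mult(\cI_\bullet)-\vol(v).
\]

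For the final assertion, Proposition \ref{p-Izumi} supplies a constant $C>0$ with $\fa_{Cm}(v)\subseteq\fa_m(v_0)$ for all $m\in\bN$. For any $t\ge C$ we then have $\fa_m(v/t)=\fa_{tm}(v)\subseteq\fa_{Cm}(v)\subseteq\fa_m(v_0)$, so $\cF^m_{v/t}R_m=0$ identically and hence $\vol(v_0;v/t)=0$. The only substantive input in the argument is Cutkosky's existence theorem for multiplicities of graded sequences of $\fm_x$-primary ideals applied to $\cI_\bullet$ (verifying that $\cI_\bullet$ is a graded sequence is the small observation that makes the whole reduction work); everything else is length bookkeeping together with the linear boundedness from Izumi.
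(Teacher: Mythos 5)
Your argument is correct and is essentially the paper's own proof: both identify $\cF^m_v R_m$ with $\fa_m(v)/(\fa_m(v)\cap\fa_m(v_0))$ (you via inclusion–exclusion on lengths, the paper via the second isomorphism theorem), apply the Lazarsfeld--Musta\c{t}\u{a}/Cutkosky existence theorem to the graded sequence $\fa_\bullet(v)\cap\fa_\bullet(v_0)$ and to $\fa_\bullet(v)$, and kill $\cF^{tm}_v R_m$ for $t\ge C$ using the linear boundedness from Proposition \ref{p-Izumi}. (Only cosmetic quibble: $\fm_x$-primariness of the valuation ideals needs no Izumi-type input, just that $v$ is centered at $x$.)
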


\begin{proof}
From the definition we have $\cF^m_v R_m=(\fa_m(v)+\fa_m(v_0))/\fa_m(v_0)\cong \fa_m(v)/(\fa_m(v)\cap \fa_m(v_0))$, hence
\[
\ell (\cF^m_v R_m) = \ell(R/(\fa_m(v)\cap \fa_m(v_0)))-\ell(R/\fa_m(v)),
\]
thus by \cite{LM09}*{Theorem 3.8} we obtain
\begin{equation}\label{e-relativevolume}
\lim_{m\to \infty} \frac{1}{m^n/n!} \ell (\cF^m_v R_m )= \mult(\fab(v)\cap\fab(v_0))-\mult(\fab(v)).
\end{equation}

Since $\fab(v_0)$ and $\fab(v)$ are linearly bounded by each other, we have $\fa_{Cm}(v)\subseteq \fa_m(v_0)$ for some constant $C>0$. Thus $\cF_v^{Cm} R_m=0$ and
\[
\vol(v_0;v/t)=\lim_{m\to\infty} \frac{\ell( \cF_v^{tm} R_m )}{m^n/n!}=0
\]
for all $t\ge C$.
\end{proof}

Analogous to the global log Fano case, we set $\tS_m(v_0;v)=\sum^{N_m}_{i=1} \lfloor v(f_i) \rfloor$, which doesn't depend on the choice of $f_i$; indeed it is not hard to check that 
\[
\tS_m(v_0;v)=\sum_{i=0}^{+\infty} i\cdot \ell( \cF_v^i R_m/\cF_v^{i+1} R_m )=\sum_{i=1}^{+\infty} \ell( \cF_v^i R_m ).
\]
We then define
\[
S_m(v_0;v) :=\frac{A_{X,\Delta}(v_0)}{\tS_m(v_0;v_0)}\cdot \tS_m(v_0;v),
\]
\[
S(v_0;v) :=\frac{n+1}{n}\cdot \frac{A_{X,\Delta}(v_0)}{\vol(v_0)} \int_0^\infty \vol(v_0;v/t)\rd t.
\]

\begin{rem}
In the global non-Archimedean setting, a similar construction named the (logarithmic) relative volume of two norms is given in \cite[Section 3]{BJ18}. However, we measure `the relative volume' by taking a quotient instead of a difference.
\end{rem}

\begin{lem} \label{l-Sfunction}
For any $v_0,v\in \Val^*_{X,x}$, we have $S(v_0;v)=\lim_{m\to\infty} S_m(v_0;v)$. Moreover, the function $t\mapsto \vol(v_0;v/t)$ is continuous.
\end{lem}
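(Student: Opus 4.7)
The plan is to first establish continuity of $\varphi(t) := \vol(v_0;v/t)$ by a direct length comparison, and then use bounded convergence to turn the sum defining $\tS_m(v_0;v)$ into the integral defining $S(v_0;v)$.

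For continuity, the key observation is that $\cF_v^s R_m = (\fa_s(v) + \fa_m(v_0))/\fa_m(v_0)$, so for $0<t<t+\epsilon$ the natural map $\fa_{tm}(v) \to \cF_v^{tm}R_m/\cF_v^{(t+\epsilon)m}R_m$ is surjective with kernel containing $\fa_{(t+\epsilon)m}(v)$. Consequently
\[
\ell(\cF_v^{tm}R_m) - \ell(\cF_v^{(t+\epsilon)m}R_m) \le \ell(R/\fa_{(t+\epsilon)m}(v)) - \ell(R/\fa_{tm}(v)).
\]
Dividing by $m^n/n!$ and letting $m \to \infty$, the right side tends to $\vol(v)\bigl((t+\epsilon)^n - t^n\bigr)$, which vanishes as $\epsilon \to 0^+$. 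Combined with the evident monotonicity of $\varphi$, this gives right-continuity; the symmetric estimate with $t$ replaced by $t-\epsilon$ gives left-continuity, so $\varphi$ is continuous on $[0,\infty)$. Proposition \ref{p-Izumi} further shows $\varphi(t)=0$ for $t\ge C$ for some constant $C>0$.

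Next, set $g_m(t) := \ell(\cF_v^{tm}R_m)/(m^n/n!)$. Monotonicity of $g_m$ in $t$ yields
\[
\left| \tS_m(v_0;v) - \frac{m^{n+1}}{n!}\int_0^\infty g_m(t)\,dt \right| \le \ell(R/\fa_m(v_0)) = O(m^n),
\]
so dividing by $m^{n+1}/n!$ the error becomes $O(1/m)$. By Lemma-Definition \ref{l-vol(v_0;v)} applied to $v/t$, we have $g_m(t) \to \varphi(t)$ pointwise for every $t>0$; moreover $g_m$ is uniformly dominated by $g_m(0) = \ell(R/\fa_m(v_0))/(m^n/n!) \to \vol(v_0)$ and supported in $[0,C]$. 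Bounded convergence thus gives
\[
\frac{n!\,\tS_m(v_0;v)}{m^{n+1}} \longrightarrow \int_0^\infty \vol(v_0;v/t)\,dt.
\]

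To conclude, apply the same identity with $v$ replaced by $v_0$; the explicit description $\cF_{v_0}^{tm}R_m = \fa_{tm}(v_0)/\fa_m(v_0)$ for $t\le 1$ yields $\vol(v_0;v_0/t) = \vol(v_0)(1-t^n)$ on $[0,1]$ (and $0$ beyond), hence $n!\,\tS_m(v_0;v_0)/m^{n+1} \to n\vol(v_0)/(n+1)$. Taking the ratio,
\[
S_m(v_0;v) = A_{X,\Delta}(v_0)\cdot \frac{\tS_m(v_0;v)}{\tS_m(v_0;v_0)} \longrightarrow \frac{n+1}{n}\cdot \frac{A_{X,\Delta}(v_0)}{\vol(v_0)} \int_0^\infty \vol(v_0;v/t)\,dt = S(v_0;v).
\]
The main technical step is the continuity of $\varphi$; the elementary surjection–length comparison above turns out to suffice, avoiding any appeal to Newton–Okounkov body or Duistermaat–Heckman measure machinery.
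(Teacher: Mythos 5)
Your proposal is correct, and while the second half coincides with the paper's argument, the continuity step takes a genuinely different and more elementary route. For the limit $S(v_0;v)=\lim_m S_m(v_0;v)$, both you and the paper identify $\tS_m(v_0;v)/(m^{n+1}/n!)$ with an integral of rescaled lengths and conclude by dominated convergence, using the uniform bound $g_m(t)\le g_m(0)$ and the uniform support $[0,C]$ supplied by linear boundedness (Proposition \ref{p-Izumi}); the paper makes the sum--integral identity exact via a ceiling function, whereas you bound the discrepancy by $\ell(R/\fa_m(v_0))=O(m^n)$, with the small advantage that the pointwise convergence $g_m(t)\to\vol(v_0;v/t)$ is then literally Lemma-Definition \ref{l-vol(v_0;v)} applied to $v/t$. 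The real divergence is in proving continuity of $t\mapsto\vol(v_0;v/t)$: the paper compactifies $X$, picks a sufficiently ample $L$ making the relevant restriction maps surjective, rewrites $\ell(\cF_v^{tm}R_m)$ as a difference of dimensions of filtered graded linear series, and invokes \cite{BJ20}*{Proposition 2.3}, taking care that both volumes stay positive on $[0,C]$ (continuity of filtered volumes can fail where the volume vanishes). Your surjection $\fa_{tm}(v)\to\cF_v^{tm}R_m/\cF_v^{(t+\epsilon)m}R_m$ with kernel containing $\fa_{(t+\epsilon)m}(v)$ instead yields the explicit bound $0\le\vol(v_0;v/t)-\vol\bigl(v_0;v/(t+\epsilon)\bigr)\le\vol(v)\bigl((t+\epsilon)^n-t^n\bigr)$, i.e.\ a Lipschitz-type estimate, which gives continuity on all of $[0,\infty)$ at once, avoids the compactification and the Okounkov-body/filtered-linear-series input, and sidesteps the vanishing-volume issue entirely; what the paper's route buys in exchange is the structural identification $\vol(v_0;v/t)=\vol(V^t_\bullet)-\vol(W^t_\bullet)$ with global filtered series, machinery it reuses elsewhere (e.g.\ in Theorem \ref{t-kollarcomponent}). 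Two minor points to record in your write-up: $\fa_s(v)$ for real $s$ and the scaling $\ell(R/\fa_{sm}(v))\sim s^n\vol(v)\,m^n/n!$ need the standard rounding comparison, and your comparison of $\tS_m(v_0;v)$ with $\int_0^\infty g_m$ uses the identity $\tS_m(v_0;v)=\sum_{i\ge 1}\ell(\cF_v^i R_m)$, which the paper records just before the lemma.
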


\begin{proof}
We can embed $(X,\Delta)$ into a projective variety $(\bar{X},\bar{\Delta})$. 
By \cite{LM09}*{Lemma 3.9}, we can find a sufficiently ample line bundle $L$ such that the natural map
\begin{equation} \label{e-ample}
    H^0(\bar{X},L^m)\to H^0(\bar{X},L^m\otimes \cO_X/\fa_{2Cm}(v))
\end{equation}
is surjective for all $m\in\bN$, where $C$ is a positive integer such that $\fa_{Cm}(v)\subseteq\fa_m(v_0)$ and $\fa_{Cm}(v_0)\subseteq\fa_m(v)$ for all $m\in\bN$. Note that this implies that the restriction map
\begin{equation} \label{e-restriction}
    h\colon H^0(\bar{X},L^{m})\to H^0(\bar{X},L^{m}\otimes \cO_X/\fa_{m}(v_0))\cong R/\fa_{m}(v_0)
\end{equation}
is also surjective, where the last isomorphism is given by a trivialization of $L$ near $x$. For such $L$,
\[
W_m:=H^0(\bar{X},L^m\otimes \fa_m(v_0)) \quad\text{and}\quad V_m:=H^0(\bar{X},L^m)
\]
defines two graded linear series $W_\bullet$, $V_\bullet$ that contain ample series. 
The valuation $v$ induces a filtration $\cF_v$ on both $V_\bullet$ and $W_\bullet$ by setting $\cF^\lambda_v V_m = \{s\in H^0(\bar{X},L^m)\,|\,v(s)\ge \lambda\}$ and $\cF^\lambda_v W_m = W_m\cap \cF^\lambda_v V_m$. 

Through \eqref{e-restriction}, the image of $\cF_v$ induces a filtration $\cF_1$ on $R_m:=R/\fa_m(v_0)$. We claim that it is the same as the filtration $\cF^{\bullet}_v$ on $R_m$. Indeed, given an element $f\in \cF_v^\lambda V_m$, it is clear that its image $\bar{f}\in R_m$ lies in $\cF_v^\lambda R_m$. Conversely, if $0\neq\bar{f}\in \cF_v^\lambda R_m$, then it can be lifted to some $f\in R$ with $v(f)\ge \lambda$. Since
$$H^0(\bar{X},L^{m})\to H^0(\bar{X},L^{m}\otimes \cO_X/\fa_{Cm}(v))$$
is a surjective, there exists some $s\in \cF^\lambda_v V_m$ such that $s$ and $f$ has the same image in $R/\fa_{Cm}(v)$. As $\fa_{Cm}(v)\subseteq\fa_m(v_0)$, we see that the restriction of $s$ in $R_m$ gives $\bar{f}$. This proves the claim.

Let $W^t_m=\cF_v^{tm} W_m$ and $V^t_m=\cF_v^{tm} V_m$.
Then from the above claim we have $\ell( \cF_v^{tm} R_m )= \dim V^t_m - \dim W^t_m$, hence
\[
\vol(v_0;v/t) = \vol(V^t_\bullet)-\vol(W^t_\bullet),
\]
which, by \cite{BJ20}*{Proposition 2.3}, is continuous in $t$ when $0\le t\le C$ since $\vol(V^C_\bullet)\ge \vol(W^C_\bullet)>0$ by \eqref{e-ample}; on the other hand, $\vol(v_0;v/t)=0$ when $t\ge C$ as in Lemma \ref{l-vol(v_0;v)}, thus the function $t\mapsto \vol(v_0;v/t)$ is continuous everywhere.

We next prove $S(v_0;v)=\lim_{m\to\infty} S_m(v_0;v)$. We claim that
\begin{equation} \label{e-lim of tS_m}
    \lim_{m\to\infty} \frac{\tS_m(v_0;v)}{m^{n+1}/n!} = \int_0^\infty \vol(v_0;v/t)\rd t.
\end{equation}
By definition, this is equivalent to
\begin{equation} \label{e-lim of tS_m-cont'd}
    \lim_{m\to\infty} \frac{\sum_{i=1}^{\infty} \ell(\cF_v^i R_m)}{m^{n+1}/n!}=\int_0^\infty \vol(v_0;v/t)\rd t.
\end{equation}
Let $\psi_m(t)=\frac{\ell( \cF_v^{\lceil tm \rceil}R_m )}{m^n/n!}$. Then we may rewrite the expression in the above limit as $\int_0^\infty \psi_m(t) \rd t$. Notice that $\lim_{m\to\infty} \psi_m(t)=\vol(v_0;v/t)$ and $\psi_m(t)=0$ for all $t\ge C$ and all $m\in\bN$. The equality \eqref{e-lim of tS_m-cont'd} now follows from the dominated convergence theorem.

It is clear that $\vol(v_0;v_0/t)=\max\{(1-t^n)\vol(v_0),0\}$ for all $t\ge 0$, thus taking $v=v_0$ in \eqref{e-lim of tS_m} we get
\[
\lim_{m\to\infty} \frac{\tS_m(v_0;v_0)}{m^{n+1}/n!} = \int_0^\infty \vol(v_0;v_0/t)\rd t = \int_0^1 (1-t^n)\vol(v_0) \rd t = \frac{n}{n+1}\vol(v_0),
\]
hence
\[
\lim_{m\to\infty} \frac{S_m(v_0;v)}{A_{X,\Delta}(v_0)} = \lim_{m\to\infty} \frac{\tS_m(v_0;v)}{\tS_m(v_0;v_0)} = \frac{n+1}{n}\cdot \frac{\int_0^\infty \vol(v_0;v/t)\rd t}{\vol(v_0)}.
\]
In other words, $S(v_0;v)=\lim_{m\to\infty} S_m(v_0;v)$. 
\end{proof}



\begin{defn}\label{d-ksemi}
A valuation $v_0\in\Val_{X,x}^*$ is said to be {\it K-semistable} if $A_{X,\Delta}(v)\ge S(v_0;v)$ for all $v\in\Val_{X,x}^*$. We also define the stability threshold $\delta(v_0)$ of a valuation $v_0\in\Val_{X,x}^*$ as $\delta(v_0)=\inf_v\delta(v_0;v)$ where $\delta(v_0;v)=\frac{A_{X,\Delta}(v)}{S(v_0;v)}$ and the infimum runs over all valuations $v\in\Val_{X,x}^*$.
\end{defn}

\begin{rem}
The notion of K-semistable valuation has been previously defined for valuations which are quasi-monomial, and whose associated graded rings are finitely generated (see \cite{Xu-ICM}*{Page 819} or \cite{LLX20}*{Theorem 4.14}). Whereas it is known that minimizers of $\hvol_{X,\Delta}$ are quasi-monomial by \cite{Xu20}, the finite generation of the associated graded rings remains open. Therefore, while Definition \ref{d-ksemi} is conjecturally equivalent to the previous definition, we circumvent the issue of finite generation.  
\end{rem}

From the definition it is clear that $\delta(v_0)=\delta(\lambda\cdot v_0)$, thus $v_0$ is K-semistable if and only if $\lambda v_0$ is K-semistable for some $\lambda>0$. In the special case of divisorial valuations induced by Koll\'ar components, we have the following equivalent characterization, which serves as the motivation of our definition.

\begin{thm}\label{t-kollarcomponent}
Let $S$ be a Koll\'ar component over $x\in (X,\Delta)$ (see Definition \ref{d-kollarcomp}).
Then we have $\delta(\ord_S)\ge \min\{1,\delta(S,\Delta_S)\}$ and the valuation $\ord_S$ is K-semistable if and only if the log Fano pair $(S,\Delta_S)$ is K-semistable.
\end{thm}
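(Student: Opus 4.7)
The plan is to use the plt blowup $\pi\colon Y\to X$ extracting $S$ to convert the local computation of $\delta(\ord_S)$ into a global stability computation on the log Fano pair $(S,\Delta_S)$. Write $A:=A_{X,\Delta}(\ord_S)$ and set $L:=-S|_S$, which is ample on $S$ by $\pi$-ampleness of $-S$. Restricting the identity $K_Y+\pi_*^{-1}\Delta+S=\pi^*(K_X+\Delta)+A\cdot S$ to $S$ and using the adjunction $(K_Y+\pi_*^{-1}\Delta+S)|_S=K_S+\Delta_S$ gives the key proportionality $-K_S-\Delta_S=A\cdot L$. Since $\fa_i(\ord_S)=\pi_*\cO_Y(-iS)$, Kawamata--Viehweg vanishing applied to the $\pi$-ample divisor $-iS$ on the plt pair $(Y,\pi_*^{-1}\Delta+S)$ yields $R^1\pi_*\cO_Y(-(i+1)S)=0$ for $i\gg 0$, producing asymptotic isomorphisms $\fa_i(\ord_S)/\fa_{i+1}(\ord_S)\cong H^0(S,iL)$; thus the associated graded ring of $(R,\ord_S)$ agrees in high degree with the section ring $R(S,L)=\bigoplus_i H^0(S,iL)$.

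Given any other valuation $v\in\Val^*_{X,x}$, its center on $Y$ meets $S$, so $v(S)\in[0,+\infty)$ is well-defined. Since the compatible basis $\{f_j\}$ is adapted simultaneously to $\ord_S$ and $v$, the filtration $\cF_v^\bullet R_m$ respects the $\ord_S$-grading, and after subtracting the contribution of $v(S)\cdot\ord_S(\cdot)$ on the $i$-th graded piece it descends to a multiplicative $\bR$-filtration $\mathcal{G}_v$ on $R(S,L)$. Writing $\tS_m(\ord_S;v)=\sum_j\lfloor v(f_j)\rfloor$ as a sum grouped by $\ord_S(f_j)$ and passing to the limit via Lemma \ref{l-Sfunction}, I expect the identity
\[
S(\ord_S;v)\;=\;A\cdot v(S)\;+\;S_{-K_S-\Delta_S}(\mathcal{G}_v),
\]
where $S_{-K_S-\Delta_S}(\mathcal{G}_v)$ is the standard global $S$-invariant in the sense of \cite{BJ20}; the proportionality $-K_S-\Delta_S=A\cdot L$ is what absorbs the leading factor $A$. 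On the other hand, from $K_Y+\pi_*^{-1}\Delta+(1-A)S=\pi^*(K_X+\Delta)$ and the lsc extension of log discrepancy \cite{JM12,BdFFU15}, one has $A_{X,\Delta}(v)=A\cdot v(S)+A_{(Y,\pi_*^{-1}\Delta+S)}(v)$, and plt-pair adjunction along $S$ (classical for divisorial $v$, extended by approximation) yields $A_{(Y,\pi_*^{-1}\Delta+S)}(v)\geq\lct(S,\Delta_S;\mathcal{G}_v)$.

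Combining these two identities with the elementary inequality $\frac{p+q}{p+r}\geq\min\{1,q/r\}$ (valid for $p,q\geq 0$, $r>0$) gives
\[
\delta(\ord_S;v)\geq\frac{A\cdot v(S)+\lct(S,\Delta_S;\mathcal{G}_v)}{A\cdot v(S)+S_{-K_S-\Delta_S}(\mathcal{G}_v)}\geq\min\Bigl\{1,\,\frac{\lct(S,\Delta_S;\mathcal{G}_v)}{S_{-K_S-\Delta_S}(\mathcal{G}_v)}\Bigr\}\geq\min\{1,\delta(S,\Delta_S)\},
\]
where the last step is Fujita--Odaka's characterization of $\delta(S,\Delta_S)$ as an infimum over $\bR$-filtrations of the section ring of $-K_S-\Delta_S$. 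Taking infimum over $v$ proves the lower bound $\delta(\ord_S)\geq\min\{1,\delta(S,\Delta_S)\}$, and in particular K-semistability of $(S,\Delta_S)$ implies K-semistability of $\ord_S$. For the converse, I would bound $\delta(\ord_S)$ from above by testing against quasi-monomial valuations of the form $v_\lambda=\lambda\ord_S+\mu\ord_{F'}$, where $F'$ is any prime divisor on a smooth model dominating $Y$ whose restriction to $S$ is a given prime divisor $F$ over $S$; direct computation with the QM adjunction formula yields $A_{X,\Delta}(v_\lambda)=\lambda A+\mu A_{S,\Delta_S}(\ord_F)$ and $S(\ord_S;v_\lambda)=\lambda A+\mu S_{-K_S-\Delta_S}(\ord_F)$, so letting $\lambda\to 0^+$ gives $\delta(\ord_S)\leq\delta(S,\Delta_S;\ord_F)$. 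Taking infimum over $F$ and applying Fujita--Odaka again yields $\delta(\ord_S)\leq\delta(S,\Delta_S)$, so combined with the first bound K-semistability of $\ord_S$ forces $\delta(S,\Delta_S)\geq 1$, completing the equivalence.

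The main obstacle is the adjunction estimate $A_{(Y,\pi_*^{-1}\Delta+S)}(v)\geq\lct(S,\Delta_S;\mathcal{G}_v)$ for an \emph{arbitrary} $v\in\Val^*_{X,x}$: for divisorial or quasi-monomial $v$ it is the classical adjunction/PIA for plt pairs along $S$, but the general case requires either careful approximation of $v$ by divisorial valuations using the lsc extension of $A$, or a direct argument bounding the log canonical threshold of the graded sequence $\fa_\bullet(v)$ pushed through $\pi$ and restricted to $S$. Producing divisorial lifts $F'\to F$ in the converse direction is routine given a suitable log resolution, but must be arranged so that the resulting QM weights define a valuation on $X$ with the expected finite log discrepancy.
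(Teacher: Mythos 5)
Your first step---using Kawamata--Viehweg vanishing to identify $\bigoplus_m \fa_m(\ord_S)/\fa_{m+1}(\ord_S)$ with the section ring $R(S,L)$, $L=-S|_S$, and converting $S(\ord_S;v)$ into the global $S$-invariant of the induced filtration on $R(S,-K_S-\Delta_S)$ via $-K_S-\Delta_S\sim_{\bQ}A\cdot L$---is exactly the paper's equation \eqref{e-S(v0;v)=S(F_v)} (the shift by $v(S)$ you perform up front is harmless, since $\pi$-ampleness of $-S$ gives $v(\fab(\ord_S))=v(S)$; the paper keeps the unshifted filtration and only shifts in the converse direction). The gap is in the two load-bearing steps after that. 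In the forward direction you need (a) the valuation-level adjunction inequality $A_{Y,S+\pi^{-1}_*\Delta}(v)\ge \lct(S,\Delta_S;\mathcal{G}_v)$ for an \emph{arbitrary} $v\in\Val^*_{X,x}$, which you yourself flag as unproven, and (b) the statement that $\lct(S,\Delta_S;\mathcal{G})\ge \delta(S,\Delta_S)\cdot S(-K_S-\Delta_S;\mathcal{G})$ for an arbitrary linearly bounded $\bR$-filtration $\mathcal{G}$. Neither is ``Fujita--Odaka'': \cites{FO18,BJ20} compute $\delta$ via basis type divisors and valuations, not via arbitrary filtrations, and the known proofs of both (a) and (b) run through exactly the compatible-basis-plus-inversion-of-adjunction mechanism. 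The paper is structured precisely to avoid needing either: it lifts a $\cF_v$-compatible basis of $H^0(S,-m(K_S+\Delta_S))$ to elements of $\fa_{Am}(\ord_S)$, forms $D$ with $\pi^*D=A\cdot S+\tD$, invokes the \emph{definition} of $\delta_m(S,\Delta_S)$ to get $(S,\Delta_S+\delta_m\tD|_S)$ lc, applies inversion of adjunction only to this \emph{pair}, and then reads off $A_{X,\Delta}(v)\ge\delta_m\, v(D)=\delta_m\, S_m(-K_S-\Delta_S;\cF_v)$ for every $v$ at once from lc-ness of $(X,\Delta+\delta_m D)$. So as written, your argument reduces the theorem to statements whose proofs would have to reproduce the paper's actual argument.

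The converse has a similar problem. Testing $\delta(\ord_S)$ against $v_\lambda=\lambda\,\ord_S+\mu\,\ord_{F'}$ is morally the cone picture, but the two identities you assert, $A_{X,\Delta}(v_\lambda)=\lambda A+\mu A_{S,\Delta_S}(\ord_F)$ and $S(\ord_S;v_\lambda)=\lambda A+\mu S_{-K_S-\Delta_S}(\ord_F)$, require a careful choice of model and of the lift $F'$: for a generic lift, adjunction only compares $A_{Y,S+\pi^{-1}_*\Delta}(\ord_{F'})$ with $A_{S,\Delta_S}(\ord_F)$ up to contributions of other exceptional divisors whose restrictions also contain $F$, and an inequality in the wrong direction is useless for an \emph{upper} bound on $\delta(\ord_S)$; likewise the identification of the induced filtration with the one coming from $\ord_F$ is only asymptotic and needs proof. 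The paper avoids constructing any such lift: it takes an arbitrary $m$-basis type divisor $\tD|_S$ of $(S,\Delta_S)$, lifts it to $D$ on $X$, uses K-semistability of $\ord_S$ against \emph{all} $v\in\Val^*_{X,x}$ (together with the shifted filtration and \cite{BJ20}*{Corollary 2.10}) to show $(Y,S+\pi^{-1}_*\Delta+\epsilon_m\tD)$ is lc with $\epsilon_m\to 1$, and concludes $\delta_m(S,\Delta_S)\ge\epsilon_m$ by ordinary adjunction of pairs. To salvage your route you would either have to prove the valuation/filtration adjunction statements (essentially redoing the paper's argument) or carry out the quasi-monomial lifting with the precision of \cites{Li17,LX20}.
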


\begin{proof}

Let $v_0=\ord_S$ and let $v\in\Val^*_{X,x}$. We know there is an ample $\bQ$-divisor $L\sim_{\mathbb{Q}} -S|_S$ on $S$ such that the short exact sequences 
$$0\to \cO_Y(-(m+1)S)\to \cO_Y(-mS)\to \cO_S(mL)\to 0$$ hold (see e.g. \cite{Kol13}*{Section 4.1}), where by convention $\cO_S(mL):=\cO_S(\lfloor mL \rfloor)$. Since $R^1\pi_*\cO_Y(-mS)=0$ for all $m\ge 0$ by Kawamata-Viehweg vanishing, we get isomorphisms 
$$\fa_m/\fa_{m+1}\cong H^0(S,mL)$$ where $\fa_m:=\fa_m(\ord_S)$. After identifying $\oplus_{m\in\bN} \fa_m/\fa_{m+1}$ with $R(S,L):=\oplus_{m\in\bN} H^0(S,mL)$, the valuation $v$ induces a filtration $\cF_v$ on the section ring $R(S,L)$. 

We claim that
\begin{equation} \label{e-S(v0;v)=S(F_v)}
    S(v_0;v)=A_{X,\Delta}(v_0)\cdot S(L;\cF_v),
\end{equation}
where $S(L;\cF_v)$ denotes the $S$-invariant of a filtration as in \cite{BJ20}*{Section 2.5-2.6} (we will also use its approximated versions $S_m(L;\cF_v)$ from \emph{loc. cit.}). To see this, we note that 
\[
\tS_m(v_0;v)=\sum_{i=1}^\infty \sum_{j=1}^m \ell(\cF^i_v (\fa_{j-1}/\fa_j)) = \sum_{j=1}^m j\cdot h^0(S,jL)\cdot S_j(L;(\cF_v)_\bN).
\]
By \cite{BJ20}*{Corollary 2.12}, we have $S_j(L;(\cF_v)_\bN)\to S(L;(\cF_v)_\bN)=S(L;\cF_v)$ ($j\to\infty$), thus as $h^0(S,jL)=(L^{n-1})\frac{j^{n-1}}{(n-1)!}+O(j^{n-2})$, we obtain 
\[
\lim_{m\to\infty} \frac{\tS_m(v_0;v)}{m^{n+1}/(n+1)!}=n(L^{n-1})\cdot S(L;\cF_v).
\]
Thus
\[
\frac{S(v_0;v)}{A_{X,\Delta}(v_0)}=\lim_{m\to\infty} \frac{\tS_m(v_0;v)}{\tS_m(v_0;v_0)} = \frac{S(L;\cF_v)}{S(L;\cF_{v_0})}.
\]
On the other hand, it is clear from the definition that $S(L;\cF_{v_0})=1$ (the filtration $\cF_{v_0}$ satisfies $\cF_{v_0}^j H^0(S,mL)=H^0(S,mL)$ if $j\le m$ and $\cF_{v_0}^j H^0(S,mL)=0$ when $j\ge m+1$), which proves \eqref{e-S(v0;v)=S(F_v)}. 

Since $-(K_S+\Delta_S)\sim_\bQ -(K_Y+\pi^{-1}_*\Delta+S)|_S\sim A_{X,\Delta}(v_0)\cdot L$, we may rewrite \eqref{e-S(v0;v)=S(F_v)} as \begin{equation} \label{e-S(v_0;v)=S(F_v) cont'd}
    S(v_0;v)=S(-(K_S+\Delta_S);\cF_v).
\end{equation}
Let $m\in\bN$ be a sufficiently divisible integer and let $f_1,\cdots,f_N\in \fa_{Am}$ (where $A:=A_{X,\Delta}(v_0)$) be the lift of a basis $\{\bar{f}_i\}$ of $H^0(S,-m(K_S+\Delta_S))=H^0(S,mAL)$. Let $$N:=\dim H^0(S,mAL)\mbox{\ \ \ and\ \ \ }D=\frac{1}{mN}\sum_{i=1}^N \{f_i=0\}.$$ Then we have $\pi^*D=A\cdot S+\tD$ where $\tD|_S$ is an $m$-basis type $\bQ$-divisor of the log Fano pair $(S,\Delta_S)$ (see \cites{FO18,BJ20}). Let $\delta_m:=\min\{1,\delta_m(S,\Delta_S)\}$. From the definition of stability thresholds, we know that the pair $(S,\Delta_S+\delta_m \tD|_S)$ is lc, thus $(Y,S+\pi^{-1}_*\Delta+\delta_m \tD)$ is also lc by inversion of adjunction. We have $$K_Y+S+\pi^{-1}_*\Delta+\delta_m D\ge \pi^*(K_X+\Delta+\delta_m D),$$ hence $(X,\Delta+\delta_m D)$ is lc, which implies that $A_{X,\Delta}(v)\ge \delta_m\cdot v(D)$ for any $v\in\Val^*_{X,x}$ and any $D$ as above. 

If we choose $\bar{f}_i$ to be compatible with the filtration $\cF_v$, then $v(D) = S_m(-(K_S+\Delta_S);\cF_v)$
and we obtain $$A_{X,\Delta}(v)\ge \delta_m\cdot S_m(-(K_S+\Delta_S);\cF_v).$$ Letting $m\to \infty$, we deduce $\delta(v_0)\ge \min\{1,\delta(S,\Delta_S)\}$ using \eqref{e-S(v_0;v)=S(F_v) cont'd}. In particular, if $(S,\Delta_S)$ is K-semistable, then $v_0=\ord_S$ is K-semistable.

Conversely, if $v_0$ is K-semistable, then we have 
$$A_{X,\Delta}(v)\ge S(-(K_S+\Delta_S);\cF_v)$$ 
for any $v\in\Val^*_{X,x}$. Let $c:=v(\fab(v_0))$. We may shift the filtration $\cF_v$ by $c$ to get a new filtration $\cF$ on $R(S,L)$, i.e., $\cF^\lambda H^0(S,mL):=\cF_v^{\lambda+cm} H^0(S,mL)$. It satisfies $\cF^0 H^0(S,mL)=H^0(S,mL)$ as $v(\fa_m)\ge cm$ for all $m\in\bN$. By \cite{BJ20}*{Corollary 2.10}, there exists some $\epsilon_m$ with $\lim_{m\to\infty} \epsilon_m = 1$ such that for all $m\in\bN$ and any $v\in\Val^*_{X,x}$,
\begin{align*}
\epsilon_m\cdot S_m(-(K_S+\Delta_S);\cF)&\le  S(-(K_S+\Delta_S);\cF)\\
&=S(-(K_S+\Delta_S);\cF_v)-A_{X,\Delta}(v_0)\cdot v(\fab(v_0))   \\
&\le  A_{X,\Delta}(v)-A_{X,\Delta}(v_0)\cdot v(\fab(v_0))\ . 
\end{align*}
For sufficiently divisible integer $m$ and with $\{f_i\}$, $D$ and $\tD$ as before, 
this means that $(Y,S+\pi^{-1}_*\Delta+\epsilon_m \tD)$ is lc. By adjunction we see that $(S,\Delta_S+\epsilon_m \tD|_S)$ is lc. Since $\tD|_S$ can be any $m$-basis type $\bQ$-divisor of $(S,\Delta_S)$, we conclude that $\delta_m(S,\Delta_S)\ge \epsilon_m$. Letting $m\to \infty$ we obtain $\delta(S,\Delta_S)\ge 1$, i.e. $(S,\Delta_S)$ is K-semistable.
\end{proof}

In general, if $(S,\Delta_S)$ is not K-semistable, then the inequality in Theorem \ref{t-kollarcomponent} could be strict. 


\subsection{K-semistable valuation is the unique minimizer}\label{ss-kimpliesm}

In this subsection, we show that if $\Val^*_{X,x}$ contains a K-semistable valuation, then it is the unique minimizer of $\hvol_{X,\Delta}$ up to rescaling. 

\begin{thm} \label{t-uniqueness}
Let $x\in(X=\Spec(R),\Delta)$ be a klt singularity and let $v_0\in\Val^*_{X,x}$. Assume that $v_0$ is K-semistable. Then
\begin{enumerate}
    \item $v_0$ is a minimizer of $\hvol_{X,\Delta}$, i.e., $\hvol(x,X,\Delta)=\hvol(v_0)$;
    \item if $v_1\in \Val^*_{X,x}$ is another minimizer of $\hvol_{X,\Delta}$, then $v_1=\lambda v_0$ for some $\lambda>0$.
\end{enumerate}
\end{thm}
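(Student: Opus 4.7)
The plan for part (1) is to establish a local Brunn--Minkowski-type inequality for the relative volume function and then combine it with the K-semistability hypothesis. The key pointwise estimate to prove is
\[
\vol(v_0;v/t) \;\ge\; \bigl(\vol(v_0) - t^n\vol(v)\bigr)_+ \qquad \text{for every } v\in\Val^*_{X,x} \text{ and } t\ge 0.
\]
This follows from a two-line colength count: since $\cF_v^{tm}R_m = (\fa_{tm}(v)+\fa_m(v_0))/\fa_m(v_0)$, one has $\ell(\cF_v^{tm}R_m) = \ell(R/\fa_m(v_0)) - \ell(R/(\fa_{tm}(v)+\fa_m(v_0)))$, and the surjection $R/\fa_{tm}(v)\twoheadrightarrow R/(\fa_{tm}(v)+\fa_m(v_0))$ bounds the second term by $\ell(R/\fa_{tm}(v))$; letting $m\to\infty$ and using $\ell(R/\fa_{tm}(v))/(m^n/n!) \to t^n\vol(v)$ yields the estimate. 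Integrating over $t\in[0,T']$ with $T':=(\vol(v_0)/\vol(v))^{1/n}$ (the vanishing point of the lower bound) produces
\[
\int_0^{\infty}\vol(v_0;v/t)\,\rd t \;\ge\; \frac{n}{n+1}\vol(v_0)^{(n+1)/n}\vol(v)^{-1/n},
\]
whence $S(v_0;v) \ge A_{X,\Delta}(v_0)\bigl(\vol(v_0)/\vol(v)\bigr)^{1/n}$ from the definition of $S$. The K-semistability hypothesis $A_{X,\Delta}(v)\ge S(v_0;v)$ then rearranges to $\hvol_{X,\Delta}(v)\ge \hvol_{X,\Delta}(v_0)$, proving (1).

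For part (2), if $v_1$ is another minimizer then equality holds throughout the chain of inequalities above with $v=v_1$. Combined with the continuity of $t\mapsto \vol(v_0;v_1/t)$ from Lemma \ref{l-Sfunction}, the integral equality forces the pointwise identity
\[
\vol(v_0;v_1/t) = \bigl(\vol(v_0)-t^n\vol(v_1)\bigr)_+ \qquad\text{for all } t\ge 0,
\]
which, read back through the colength identity above, is equivalent to the two asymptotic statements
\[
\ell\bigl(\fa_m(v_0)/(\fa_m(v_0)\cap\fa_{tm}(v_1))\bigr) = o(m^n) \text{ for every } t<T',
\]
\[
\ell\bigl(\fa_{tm}(v_1)/(\fa_{tm}(v_1)\cap\fa_m(v_0))\bigr) = o(m^n) \text{ for every } t>T'.
\]

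To deduce $v_1=T'v_0$ I argue by contradiction: suppose there is some $f\in\fm$ with $v_1(f)/v_0(f)\ne T'$, write $a:=v_0(f)$, $b:=v_1(f)$, and first assume $b/a>T'$ (the reverse case being symmetric). Pick $t\in(T',b/a)$ and $c\in(a,b/t)$, and for each $k\in\bN$ set $m:=ck$. Multiplication by $f^k$ yields an $R$-linear map $R\to\fa_{tm}(v_1)/(\fa_{tm}(v_1)\cap\fa_m(v_0))$ whose well-definedness uses $tc<b$ and whose kernel is identified as $\fa_{k(c-a)}(v_0)$ using $c>a$. The resulting injection $R/\fa_{k(c-a)}(v_0)\hookrightarrow \fa_{tm}(v_1)/(\fa_{tm}(v_1)\cap\fa_m(v_0))$ forces the right-hand side to have length of order $\vol(v_0)\bigl((c-a)/c\bigr)^n m^n/n!=\Theta(m^n)$, contradicting the second asymptotic above. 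The opposite case $b/a<T'$ is handled by the analogous injection $R/\fa_{k(tc-b)}(v_1)\hookrightarrow\fa_m(v_0)/(\fa_m(v_0)\cap\fa_{tm}(v_1))$ for suitable $t\in(b/a,T')$ and $c\in(b/t,a)$, contradicting the first asymptotic. The main obstacle is upgrading the asymptotic equality of valuation ideals into pointwise equality of valuations; the elementary injections just described accomplish this without any finite generation hypothesis on the associated graded rings.
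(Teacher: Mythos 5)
Your proposal is correct, and while part (1) is essentially the paper's argument in a repackaged form, your treatment of part (2) takes a genuinely different route. For (1), the paper bounds $S(v_0;v)$ from below via the discrete minimal-vanishing-order quantity $w_m(v)$ and Lemma \ref{l-asymptotic of w_m}, whereas you prove the pointwise bound $\vol(v_0;v/t)\ge(\vol(v_0)-t^n\vol(v))_+$ by the colength/surjection count and integrate; the two are equivalent in content and both give $S(v_0;v)\ge A_{X,\Delta}(v_0)(\vol(v_0)/\vol(v))^{1/n}$. For (2), the paper only extracts the equality at the single scale $t=1$ (after normalizing log discrepancies), namely $\mult(\fab(v_0)\cap\fab(v_1))=\vol(v_0)=\vol(v_1)$, via the refined $k_m$/$\epsilon$/$\gamma$ counting, and then concludes through the separate Lemma \ref{l-valuation equal}, whose proof requires the external input \cite{LX20}*{Proposition 2.7} to produce an element $g$ with the reversed strict valuation inequality. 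You instead use continuity (Lemma \ref{l-Sfunction}) to upgrade the integral equality to the pointwise identity $\vol(v_0;v_1/t)=(\vol(v_0)-t^n\vol(v_1))_+$ for \emph{all} $t$, and this stronger information lets you treat both directions of a putative discrepancy $v_1(f)/v_0(f)\ne T'$ symmetrically by the elementary multiplication-by-$f^k$ injections, with no appeal to \cite{LX20}. What each buys: your argument is more self-contained and avoids the asymmetry that forces the paper to quote \cite{LX20}*{Proposition 2.7}, while the paper's route isolates Lemma \ref{l-valuation equal} as a statement of independent interest and needs only the $t=1$ equality. Two small points of hygiene in your write-up: you should note that for each fixed $t$ the limit defining $\vol(v_0;v_1/t)$ exists (this is supplied by the graded-linear-series description in the proof of Lemma \ref{l-Sfunction}, extending Lemma \ref{l-vol(v_0;v)}), so that the pointwise identity really is equivalent to your two $o(m^n)$ statements; and the indices $tm$, $ck$ should carry ceilings with the injections asserted for $k$ large, which affects nothing since your contradiction only needs the estimate along the subsequence $m=\lceil ck\rceil$.
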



For the proof we need some auxiliary calculation. For each valuation $v\in\Val^*_{X,x}$ and every integer $m>0$, we set 
\[
w_m(v):=\min\sum_{i=1}^m \lfloor v(f_i)\rfloor
\]
where the minimum runs over all $f_1,\cdots,f_m\in R\setminus\{0\}$ that are compatible with $\fab(v)$. Clearly the minimum is achieved by some $f_1,\cdots,f_m$ that are compatible with $\fab(v)$, if and only if for the unique integer $r$ satisfying $\ell(R/\fa_{r+1}(v)) >m\ge \ell(R/\fa_r(v))$, $f_1,\cdots,f_m$ span $R/\fa_r(v)$ and form a linearly independent set in $R/\fa_{r+1}(v)$.

\begin{lem} \label{l-asymptotic of w_m}
We have 
\[
\lim_{m\to\infty}  \frac{w_m(v)}{m^{\frac{n+1}{n}} }=\frac{n}{n+1}\cdot \left(\frac{n!}{\vol(v)}\right)^{1/n}.
\]
\end{lem}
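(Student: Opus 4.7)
The plan is to make the minimization problem in the definition of $w_m(v)$ completely explicit by translating compatibility with $\fab(v)$ into a bin-packing constraint, and then to read off the asymptotics from $N_r := \ell(R/\fa_r(v)) \sim \vol(v) r^n/n!$.

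First I would observe that for any tuple $\{f_1,\dots,f_m\}\subset R\setminus\{0\}$, compatibility with $\fab(v)$ is equivalent to the following combinatorial statement: for every $j\in\bN$, at most $N_{j+1}-N_j$ of the $f_i$ satisfy $\lfloor v(f_i)\rfloor = j$ (equivalently, $f_i\in\fa_j(v)\setminus\fa_{j+1}(v)$). The non-trivial direction is obtained by subtracting the compatibility constraints at levels $j$ and $j+1$: the images of the level-$j$ elements become linearly independent inside $\fa_j(v)/\fa_{j+1}(v)$, which has dimension exactly $N_{j+1}-N_j$. The converse is immediate since any choice of linearly independent residues in each graded piece assembles into a compatible tuple.

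Second, to minimize $\sum_i\lfloor v(f_i)\rfloor$ subject to these capacity constraints, one fills the bins $j=0,1,2,\dots$ greedily from the bottom up. If $r$ is the unique integer with $N_r\le m<N_{r+1}$, this places $N_{j+1}-N_j$ elements in each bin $j<r$ and the remaining $m-N_r$ elements in bin $r$, which agrees with the description in the statement. A short telescoping computation then gives the closed formula
\[
w_m(v)=\sum_{j=0}^{r-1} j(N_{j+1}-N_j)+r(m-N_r)=rm-\sum_{k=1}^{r} N_k.
\]

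Finally I would compute the asymptotics. From Definition \ref{d-vol}, $N_k=\tfrac{\vol(v)}{n!}k^n+o(k^n)$, so the sandwich $N_r\le m<N_{r+1}$ yields $r\sim\bigl(n!\,m/\vol(v)\bigr)^{1/n}$ as $m\to\infty$, and a standard Ces\`aro argument promotes this to $\sum_{k=1}^{r}N_k=\tfrac{\vol(v)}{(n+1)!}r^{n+1}+o(r^{n+1})$. Substituting into the closed formula and dividing by $m^{(n+1)/n}$ gives
\[
\frac{w_m(v)}{m^{(n+1)/n}}\;\longrightarrow\;\left(\frac{n!}{\vol(v)}\right)^{\!1/n}-\frac{1}{n+1}\left(\frac{n!}{\vol(v)}\right)^{\!1/n}=\frac{n}{n+1}\left(\frac{n!}{\vol(v)}\right)^{\!1/n},
\]
which is the desired identity. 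No step presents a serious obstacle; the only mildly delicate point is the Ces\`aro upgrade of the pointwise convergence $N_k/k^n\to\vol(v)/n!$, which is routine.
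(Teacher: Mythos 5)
Your proposal is correct and follows essentially the same route as the paper: both turn the compatibility condition into the greedy "fill the graded pieces from the bottom" description of $w_m(v)$ (the paper states this just before the lemma and sandwiches $w_m(v)$ between $\sum_{i<r} i\,\ell(\fa_i/\fa_{i+1})$ and that plus $r\,\ell(\fa_r/\fa_{r+1})$, while you record the exact closed formula $w_m(v)=rm-\sum_{k\le r}N_k$), and then both extract the limit via Abel summation and the Ces\`aro-type consequence of $N_k\sim \vol(v)k^n/n!$. The only differences are cosmetic.
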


\begin{proof}
Let $\fab=\fab(v)$. From the above description we have
\[
0\le w_m(v)-\sum_{i=0}^{r-1} i\cdot \ell(\fa_i/\fa_{i+1})\le r\cdot \ell(\fa_r/\fa_{r+1})
\]
for all integers $r,m>0$ with $\ell(R/\fa_r)\le m <\ell(R/\fa_{r+1})$. Note that this implies
\[
\lim_{r\to\infty} \frac{m}{r^n/n!}=\vol(v).
\]
We also have $\lim_{r\to\infty} \frac{\ell(\fa_r/\fa_{r+1})}{r^n}=0$ and
\begin{align*}
    \lim_{r\to\infty} \frac{\sum_{i=0}^r i\cdot \ell(\fa_i/\fa_{i+1})}{r^{n+1}/n!} & = \lim_{r\to\infty} \frac{r\cdot \ell(R/\fa_{r+1})-\sum_{i=0}^r \ell(R/\fa_i)}{r^{n+1}/n!} \\
    & =\left(1-\frac{1}{n+1}\right)\vol(v).
\end{align*}
Thus $\lim_{r\to\infty} \frac{w_m(v)}{r^{n+1}/n!}=\frac{n}{n+1}\vol(v)$ and
\[
\lim_{m\to\infty} \frac{w_m(v)}{m^{\frac{n+1}{n}}} = \lim_{r\to\infty} \left( \frac{w_m(v)}{r^{n+1}/n!}\cdot \frac{r^n/n!}{m}\cdot \frac{r}{m^{1/n}}\right) = \frac{n}{n+1}\cdot \left(\frac{n!}{\vol(v)}\right)^{1/n}.
\]
\end{proof}

\begin{proof}[Proof of Theorem \ref{t-uniqueness}]
We first prove that $v_0$ is a minimizer of $\hvol_{X,\Delta}$, i.e. $\hvol(v)\ge \hvol(v_0)$ for every valuation $v\in\Val^*_{X,x}$. Without loss of generality we may assume that $A_{X,\Delta}(v_0)=A_{X,\Delta}(v)=1$. Let $m\in\bN$ and let $f_1,\cdots,f_{N_m}$ be an $(m,v)$-basis with respect to $v_0$ (where $N_m=\ell(R/\fa_m(v_0))$).
Since $v_0$ is K-semistable, we have
\begin{equation} \label{e-A>=delta*v(D)}
    1=A_{X,\Delta}(v)\ge S(v_0;v).
\end{equation}
From the definition it is clear that $\tS_m(v_0;v_0)=w_{N_m}(v_0)$ and $\tS_m(v_0;v)\ge w_{N_m}(v)$, 
hence by Lemma \ref{l-asymptotic of w_m} we get
\[
S(v_0;v)\ge \lim_{m\to\infty} \frac{w_{N_m}(v)}{w_{N_m}(v_0)} = \left(\frac{\vol(v_0)}{\vol(v)}\right)^\frac{1}{n}.
\]
Combined with \eqref{e-A>=delta*v(D)} we immediately have 
\[
\hvol(v)=\vol(v)\ge \vol(v_0)=\hvol(v_0),
\]
i.e. $v_0$ minimizes the normalized volume function $\hvol_{X,\Delta}$.

Now assume $\vol(v_0)=\vol(v)$. We claim that
\begin{equation} \label{e-vol(v)=vol(v_0)}
    \vol(v_0)=\vol(v)=\mult(\fab(v_0)\cap\fab(v)).
\end{equation}
Suppose this is not the case, then $\vol(v_0;v)>0$ by \eqref{e-relativevolume}. Thus by the continuity part of Lemma \ref{l-Sfunction}, there exists some $\epsilon>0$ such that
\[
\gamma:=\vol\left(v_0;\frac{v}{1+2\epsilon}\right)=\lim_{m\to\infty} \frac{\ell( \cF_v^{(1+2\epsilon)m}(R/\fa_m(v_0)) )}{m^n/n!} > 0.
\]
For each $m\in\bN$, let $k_m$ be the unique integer $k$ determined by
\[
\ell(R/\fa_{k-1}(v))\le N_m<\ell(R/\fa_k(v)).
\]
Since $\vol(v_0)=\vol(v)$, we have $\lim_{m\to\infty} \frac{k_m}{m}=1$ and thus $k_m<(1+\epsilon)m$ for sufficiently large $m$. Let $g_1,\cdots,g_{N_m}\in R\setminus\{0\}$ be a sequence that's compatible with $\fab(v)$ such that 
\[
w_{N_m}(v)=\sum_{i=1}^{N_m} \lfloor v(g_i) \rfloor.
\]
Then by construction we have $v(g_i)\le k_m$ for all $1\le i\le N_m$ and the inequality
\[
\tS_m(v_0;v)=\sum_{i=1}^{N_m} \lfloor v(f_i) \rfloor \ge w_{N_m}(v)
\]
can be upgraded as
\[
\sum_{i=1}^{N_m} \min\{\lfloor v(f_i) \rfloor ,k_m\} \ge \sum_{i=1}^{N_m} \lfloor v(g_i)\rfloor = w_{N_m}(v).
\]
In particular, for sufficiently large $m$ we get
\begin{align*}
    \sum_{i=1}^{N_m} \lfloor v(f_i) \rfloor & = \sum_{j=0}^\infty j \cdot \ell( \cF_v^j R_m/\cF_v^{j+1} R_m )\\
    & \ge \sum_{j=0}^\infty \min\{j,k_m\} \cdot \ell( \cF_v^j R_m/\cF_v^{j+1} R_m )+ ((1+2\epsilon)m-k_m) \cdot \ell( \cF_v^{(1+2\epsilon)m}R_m )\\
    & \ge \sum_{j=0}^\infty \min\{j,k_m\} \cdot \ell( \cF_v^j R_m/\cF_v^{j+1} R_m )+ \epsilon m \cdot \frac{\gamma m^n}{n!} \\
    & = \sum_{i=1}^{N_m} \min\{\lfloor v(f_i)\rfloor,k_m\} + \frac{\epsilon\gamma m^{n+1}}{n!} \\
    & \ge w_{N_m}(v) + \frac{\epsilon\gamma m^{n+1}}{n!},
\end{align*}
where $R_m=R/\fa_m(v_0)$. Dividing by $\sum \lfloor v_0(f_i) \rfloor=\tS_m(v_0;v_0)=w_{N_m}(v_0)=O(m^{n+1})$ and letting $m\to\infty$, we obtain 
\[
1\ge S(v_0;v) = \lim_{m\to\infty} \frac{\tS_m(v_0;v)}{\tS_m(v_0;v_0)} > \lim_{m\to\infty} \frac{w_{N_m}(v)}{w_{N_m}(v_0)} = \left(\frac{\vol(v_0)}{\vol(v)}\right)^\frac{1}{n}
\]
where the last equality follows from Lemma \ref{l-asymptotic of w_m}, hence $\vol(v)>\vol(v_0)$, a contradiction. This proves the claim \eqref{e-vol(v)=vol(v_0)}. By the following Lemma \ref{l-valuation equal}, it implies $v=v_0$ and we are done.
\end{proof}

The following result, which is an improvement of \cite{LX20}*{Proposition 2.7}, is used in the above proof.

\begin{lem} \label{l-valuation equal}
Let $x\in X=\Spec(R)$ be a singularity and let $v_0,v_1\in\Val^*_{X,x}$. Assume that
\[
\vol(v_0)=\vol(v_1)=\mult(\fab(v_0)\cap\fab(v_1))>0.
\]
Then $v_0=v_1$.
\end{lem}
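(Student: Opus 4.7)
Write $\fab := \fab(v_0)$, $\fbb := \fab(v_1)$, $\fcb := \fab \cap \fbb$, and $e := \vol(v_0) = \vol(v_1) = \mult(\fcb) > 0$. The strategy is to prove $v_0(\fbb) \geq 1$ and, symmetrically, $v_1(\fab) \geq 1$, where $w(\mathfrak{d}_\bullet) := \lim_m w(\mathfrak{d}_m)/m$ denotes the asymptotic value of $w$ on a graded sequence, and then to combine these with the identity $w(\fab(v)) = \inf_{f \in \fm_x \setminus \{0\}} w(f)/v(f)$ to conclude $v_0 = v_1$. The starting input is the short exact sequence
\[
0 \to R/(\fa_m \cap \fb_m) \to R/\fa_m \oplus R/\fb_m \to R/(\fa_m + \fb_m) \to 0,
\]
which, after dividing by $m^n/n!$ and invoking the existence of asymptotic multiplicities for $\fm_x$-primary graded sequences (\cite{Cut13}), gives $\lim_m \ell(R/(\fa_m + \fb_m))/(m^n/n!) = 2e - e = e$. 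Consequently $\ell(\fb_m/(\fa_m \cap \fb_m)) = \ell((\fa_m + \fb_m)/\fa_m) = o(m^n)$, and symmetrically for $\fa_m/(\fa_m \cap \fb_m)$.

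To prove $v_0(\fbb) \geq 1$, assume to the contrary that $v_0(\fb_m)/m < 1 - \epsilon$ for infinitely many $m$ and some fixed $\epsilon > 0$; then for each such $m$ there is $g_m \in \fb_m$ with $v_0(g_m) < (1-\epsilon) m$. For any $h \in R$, multiplicativity of $v_0$ gives $v_0(g_m h) = v_0(g_m) + v_0(h)$, so $g_m h \in \fa_m$ forces $v_0(h) \geq m - v_0(g_m) > \epsilon m$. Hence $h \mapsto g_m h$ descends to an isomorphism $R/\fa_{m - v_0(g_m)}(v_0) \xrightarrow{\sim} g_m R/(g_m R \cap \fa_m)$, yielding
\[
\ell\big(g_m R/(g_m R \cap \fa_m)\big) \geq \ell\big(R/\fa_{\lceil \epsilon m\rceil}(v_0)\big) \sim \frac{e\,\epsilon^n}{n!}\, m^n.
\]
On the other hand, $g_m R \subseteq \fb_m$ combined with $g_m R \cap \fa_m = g_m R \cap \fa_m \cap \fb_m$ yields an injection $g_m R/(g_m R \cap \fa_m) \hookrightarrow \fb_m/(\fa_m \cap \fb_m)$, bounding the length above by $o(m^n)$; contradiction. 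Therefore $v_0(\fbb) \geq 1$, and the symmetric argument gives $v_1(\fab) \geq 1$.

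For any $v \in \Val^*_{X,x}$ and $w \in \Val_{X,x}$, the identity $w(\fab(v)) = \inf_{f \in \fm_x \setminus \{0\}} w(f)/v(f)$ follows directly from the definitions upon passing through powers $f^k$ to saturate the infimum. Applied with $(w, v) = (v_0, v_1)$, the previous step gives $\inf_f v_0(f)/v_1(f) = v_0(\fbb) \geq 1$, so $v_0(f) \geq v_1(f)$ for every nonzero $f \in \fm_x$; the analogous application with $(w, v) = (v_1, v_0)$ yields the reverse inequality, hence $v_0 \equiv v_1$ on $\fm_x$ and thereby on $K(X)^*$. The main obstacle is the colon-ideal estimate in the middle paragraph, which leverages the $o(m^n)$ smallness of $\fb_m/(\fa_m \cap \fb_m)$ (coming from equal asymptotic multiplicities) together with the positive-volume hypothesis on $v_0$ to force the valuative asymptotics to match; this elementary estimate sidesteps any finite generation assumption on $\mathrm{gr}_{v_0} R$ and constitutes the essential improvement over \cite{LX20}*{Proposition 2.7}.
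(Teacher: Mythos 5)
Your proof is correct, and it takes a route that differs in organization from the one in the paper. The paper argues by contradiction from a single element $f$ with $v_0(f)>v_1(f)$: after powering, it compares lengths via multiplication by $f^m$, then invokes \cite{LX20}*{Proposition 2.7} to produce a second element $g$ with the reverse strict inequality $v_1(g)>v_0(g)>0$, uses multiplication by $g^m$ and the auxiliary sequence $\fc_r=\fa_r(v_0)\cap\fa_{2r}(v_1)$, and derives the contradiction $(2^n-1)\mult(\fab(v_0)\cap\fab(v_1))\le 0$. You instead prove the two asymptotic inequalities $v_0(\fab(v_1))\ge 1$ and $v_1(\fab(v_0))\ge 1$ directly, each by multiplying by a well-chosen $g_m\in\fa_m(v_1)$ of small $v_0$-value and playing the resulting copy of $R/\fa_{m-v_0(g_m)}(v_0)$ (of size $\gtrsim \vol(v_0)\epsilon^n m^n/n!$) against the $o(m^n)$ bound on $\ell\big(\fa_m(v_1)/(\fa_m(v_0)\cap\fa_m(v_1))\big)$ coming from the equal-multiplicity hypothesis, and then convert to the pointwise comparison via the standard identity $w(\fab(v))=\inf_f w(f)/v(f)$. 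The underlying mechanism (multiplication maps plus the vanishing of $\mult(\fcb)-\vol(v_i)$) is the same, but your version is self-contained: it does not need \cite{LX20}*{Proposition 2.7} as an input, and it isolates exactly which hypothesis each half uses ($\mult(\fcb)=\vol(v_1)$ together with $\vol(v_0)>0$ gives $v_0\ge v_1$, and symmetrically). Two small points: the appeal to \cite{Cut13} for $\lim_m \ell(R/(\fa_m+\fb_m))/(m^n/n!)$ is unnecessary and slightly off, since $\fa_\bullet+\fb_\bullet$ with your indexing is not obviously a graded sequence; the bound you actually need, $\ell\big(\fb_m/(\fa_m\cap\fb_m)\big)=\ell(R/(\fa_m\cap\fb_m))-\ell(R/\fb_m)=o(m^n)$, follows directly from the three limits that do exist. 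Also the comparison $\ell(R/\fa_{m-v_0(g_m)}(v_0))\ge \ell(R/\fa_{\lceil\epsilon m\rceil}(v_0))$ has a harmless off-by-one (use $\lfloor \epsilon m\rfloor$), which does not affect the asymptotics.
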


\begin{proof}
We prove by contradiction. Assume that $v_0(f)\neq v_1(f)$ for some $f\in R$. Without loss of generality we may assume that $v_0(f)=\ell_0>\ell_1=v_1(f)$. Replacing $f$ by $f^k$ for some $k\in\bN$ we may further assume that $\ell_0 \ge \ell_1+1$. For $v\in\Val^*_{X,x}$ and $r\ge 0$, let $\fa_r(v)=\{f\in R\,|\,v(f)\ge r\}$. Let $\fb_r=\fa_r(v_0)\cap \fa_r(v_1)$ and $\fc_r=\fa_r(v_0)\cap \fa_{2r}(v_1)$ where $r\ge 0$.

For every $m\in\bN$ and every $s\in \fb_m$, we have $v_0(f^m s) = m\cdot v_0(f)+v_0(s)\ge m(\ell_0+1)$, thus multiplication by $f^m$ induces a map
\[
\fb_m\stackrel{f^m}{\longrightarrow} \fa_{m(\ell_0+1)}(v_0)\to \fa_{m(\ell_0+1)}(v_0)/\fb_{m(\ell_0+1)}
\]
whose kernel is contained in $\fc_m$ (since $v_1(f^m s)\ge m(\ell_0+1)$ implies $v_1(s)\ge m(\ell_0+1)-m\ell_1\ge 2m$). It follows that
\begin{equation} \label{e-dim(b_m/c_m)}
    \ell(\fa_{m(\ell_0+1)}(v_0)/\fb_{m(\ell_0+1)})\ge \ell(\fb_m/\fc_m)
\end{equation}
for all $m\in\bN$. 
By \cite{LX20}*{Proposition 2.7}, there exists some $0\neq g\in\fm_x$ such that $\ell_2=v_1(g)> v_0(g)>0$. For every $m\in\bN$ and every $s\in \fc_m$, we then have $$v_1(g^m s) = m\cdot v_1(g)+v_1(s)\ge m(\ell_2+2),$$ thus multiplication by $g^m$ induces a map
\[
\fc_m\stackrel{g^m}{\longrightarrow} \fa_{m(\ell_2+2)}(v_1)\to \fa_{m(\ell_2+2)}(v_1)/\fb_{m(\ell_2+2)}
\]
whose kernel is contained in $\fb_{2m}$ (if $v_0(g^m s)\ge m(\ell_2+2)$ then as $v_0(g)\le \ell_2$ we get $v_0(s)\ge 2m$). It follows that \begin{equation} \label{e-dim(c_m/b_2m)}
    \ell(\fa_{m(\ell_2+2)}(v_1)/\fb_{m(\ell_2+2)})\ge \ell(\fc_m/\fb_{2m})
\end{equation}
for all $m\in\bN$. Combining \eqref{e-dim(b_m/c_m)} and \eqref{e-dim(c_m/b_2m)} we see that
\begin{align*}
    & (\ell_0+1)^n (\mult(\fbb)-\vol(v_0)) + (\ell_2+2)^n (\mult(\fbb)-\vol(v_1)) \\
    =& \lim_{m\to\infty} \frac{\ell(\fa_{m(\ell_0+1)}(v_0)/\fb_{m(\ell_0+1)})}{m^n/n!} + \lim_{m\to\infty} \frac{\ell(\fa_{m(\ell_2+2)}(v_1)/\fb_{m(\ell_2+2)})}{m^n/n!}\\
    \ge & \lim_{m\to\infty} \frac{\ell(\fb_m/\fc_m)+\ell(\fc_m/\fb_{2m})}{m^n/n!}
    =\lim_{m\to\infty} \frac{\ell(\fb_m/\fb_{2m})}{m^n/n!} \\
    = & (2^n-1)\mult(\fbb)>0,
\end{align*}
which contradicts our assumption. Thus $v_0(f)=v_1(f)$ for all $f\in R$ as desired.
\end{proof}

\subsection{Every minimizer is K-semistable}\label{ss-mimpliesk}

In this subsection, we show that every valuation that minimizes the normalized volume function is K-semistable. Combined with Theorem \ref{t-uniqueness}, this proves the uniqueness of the minimizer.

\begin{thm} \label{t-minimizer K-ss}
Let $x\in(X=\Spec(R),\Delta)$ be a klt singularity and let $v_0\in\Val^*_{X,x}$ be a minimizer of the normalized volume function $\hvol_{X,\Delta}$. Then $v_0$ is K-semistable. 
\end{thm}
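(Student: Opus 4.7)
After rescaling, assume $A_{X,\Delta}(v_0)=1$, so $\hvol(v_0)=\vol(v_0)$. Fix an arbitrary $v\in\Val^*_{X,x}$; the goal is to prove $A_{X,\Delta}(v)\ge S(v_0;v)$, i.e. $\delta(v_0;v)\ge 1$. The plan is to run a derivative argument in the spirit of \cite{Li17}, but applied to a one-parameter family of graded sequences of $\fm_x$-primary ideals rather than to a family of valuations (since a ray of valuations between $v_0$ and $v$ is unavailable in general without finite generation of $\mathrm{gr}_{v_0} R$). The starting point is Liu's identity \eqref{e-hvol=lct^n*mult}, which says $\vol(v_0)=\hvol(v_0)\le \lct(X,\Delta;\fa_\bullet)^n\cdot\mult(\fa_\bullet)$ for every $\fm_x$-primary graded sequence $\fa_\bullet$.

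\textbf{Construction and multiplicity formula.}
For each $t\ge 0$, consider the mixed sequence
\[
\fc_k^{(t)}:=\fa_k(v_0)\cap\fa_{\lfloor tk\rfloor}(v),
\]
which is a decreasing graded sequence of $\fm_x$-primary ideals by Proposition \ref{p-Izumi}, and reduces to $\fa_\bullet(v_0)$ at $t=0$. From the exact sequence
\[
0\to R/\fc_k^{(t)}\to R/\fa_k(v_0)\oplus R/\fa_{\lfloor tk\rfloor}(v)\to R/\bigl(\fa_k(v_0)+\fa_{\lfloor tk\rfloor}(v)\bigr)\to 0
\]
together with the definition of $\vol(v_0;v/t)$ from Lemma-Definition \ref{l-vol(v_0;v)}, I obtain the closed formula
\[
\mult(\fc_\bullet^{(t)})=t^n\vol(v)+\vol(v_0;v/t).
\]
Feeding this into Liu's inequality yields the family
\[
\vol(v_0)\le \lct(X,\Delta;\fc_\bullet^{(t)})^n\bigl(t^n\vol(v)+\vol(v_0;v/t)\bigr)\qquad (t>0).
\]

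\textbf{Extracting K-semistability and main obstacle.}
The crude bound $\lct(X,\Delta;\fc_\bullet^{(t)})\le A_{X,\Delta}(v)/t$, obtained by testing only with $v$ and using $\fc_k^{(t)}\subseteq\fa_{\lfloor tk\rfloor}(v)$, combined with the above inequality in the limit $t\to\infty$, only recovers $\hvol(v_0)\le\hvol(v)$---the minimizing condition already assumed. To extract the sharper K-semistability inequality, the estimate on $\lct(\fc_\bullet^{(t)})$ must be refined by probing with the $(m,v)$-bases from Lemma \ref{l-compatiblebasis} and their associated basis type divisors $D_m$ (as in the proof of Theorem \ref{t-kollarcomponent}); their $v$-valuations $v(D_m)$ accumulate to a quantity proportional to $S(v_0;v)$ by Lemma \ref{l-Sfunction}. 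Plugging the sharpened $\lct$ estimate into the Liu inequality, expanding to leading order in $t$, and using the integral identity
\[
\int_0^\infty\vol(v_0;v/t)\,dt=\frac{n}{n+1}\vol(v_0)\cdot S(v_0;v)
\]
from the proof of Lemma \ref{l-Sfunction}, produces $A_{X,\Delta}(v)\ge S(v_0;v)$ after extracting the correct derivative at $t=0^+$ from the nonnegativity of $\lct^n\mult-\vol(v_0)$. The principal difficulty is precisely this sharp estimation of $\lct(\fc_\bullet^{(t)})$ to the correct leading order in $t$: it requires generalizing the derivative formula of \cite{Li17}---originally stated for divisorial valuations with finitely generated associated graded ring---to mixed ideal sequences built from $(m,v)$-bases of an arbitrary test valuation, with the interchange of limits in $m$ (the basis parameter) and $t$ (the mixing parameter) controlled by Proposition \ref{p-Izumi} and the continuity of $t\mapsto\vol(v_0;v/t)$ from Lemma \ref{l-Sfunction}.
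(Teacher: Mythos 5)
There is a genuine gap, and it sits exactly where you defer it. Your general framework (a one-parameter family of $\fm_x$-primary graded sequences interpolating $\fab(v_0)$ and $\fab(v)$, Liu's identity \eqref{e-hvol=lct^n*mult}, and a derivative argument at $t=0$ in the spirit of \cite{Li17}) is indeed the paper's framework, and your multiplicity computation for $\fc^{(t)}_k=\fa_k(v_0)\cap\fa_{\lfloor tk\rfloor}(v)$ is correct (up to replacing floors by ceilings so that the sequence is actually graded). But the plain intersection is the wrong interpolation, and no refinement of $\lct(\fc^{(t)}_\bullet)$ of the kind you gesture at can rescue the scheme as described. The quantity $S(v_0;v)$ is, by definition, proportional to the \emph{integral} $\int_0^\infty\vol(v_0;v/u)\,\rd u$; your $M(t)=t^n\vol(v)+\vol(v_0;v/t)$ has $t$-derivative at $0^+$ governed only by the local behavior of $u\mapsto\vol(v_0;v/u)$ near $u=0$ (plus a vanishing term $nt^{n-1}\vol(v)$), so "expanding to leading order in $t$" and invoking the integral identity cannot produce $S(v_0;v)$ from your family. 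Meanwhile the only lct bounds naturally available for an intersection, $\lct(\fc^{(t)}_\bullet)\le\min\{A_{X,\Delta}(v_0),A_{X,\Delta}(v)/t\}$, give back (as you yourself observe) nothing beyond the minimization hypothesis; the proposed sharpening via basis type divisors is never carried out, and it is precisely the missing content of the theorem.

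The paper resolves this by changing the construction rather than sharpening the lct of the intersection: it takes $\fb_{m,t}=\sum_{i=0}^m\fa_{m-i}(v_0)\cap\fa_i(tv)$, i.e.\ the sequence $\fab(v_0)\boxplus\fab(tv)$ of elements with $v_0(f)+t\,v(f)\ge m$. Two new ingredients then do the work: (i) Theorem \ref{t-lctsum}, $\lct(\fab\boxplus\fbb)\le\lct(\fab)+\lct(\fbb)$, proved via a summation formula for asymptotic multiplier ideals (Lemma \ref{l-summation}), which yields $\lct(\fb_{\bullet,t})\le 1+t$ after normalizing $A_{X,\Delta}(v_0)=A_{X,\Delta}(v)=1$; and (ii) the exact formula of Lemma \ref{l-multiplicity},
\[
\mult(\fb_{\bullet,t})=\vol(v_0)-(n+1)\int_0^\infty\vol(v_0;v/u)\,\frac{t\,\rd u}{(1+tu)^{n+2}},
\]
whose derivative at $t=0$ is $-(n+1)\int_0^\infty\vol(v_0;v/u)\,\rd u$ --- the summation over all levels $i$ is exactly what makes the full integral, hence $S(v_0;v)$, appear in the first-order term. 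Then $f(t)=(1+t)^n\mult(\fb_{\bullet,t})\ge \hvol(v_0)=f(0)$ forces $f'(0)\ge 0$, which is literally $A_{X,\Delta}(v)\ge S(v_0;v)$. Note that, contrary to your plan, no basis type divisors or $(m,v)$-bases enter this proof at all; they are used elsewhere (Theorems \ref{t-kollarcomponent} and \ref{t-uniqueness}). To complete your proposal you would need to either switch to the $\boxplus$ construction and prove the lct subadditivity statement, or supply an entirely new mechanism by which $\int_0^\infty\vol(v_0;v/u)\,\rd u$ emerges from your intersection family; as written, the decisive step is absent.
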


In other words, we will show that $A_{X,\Delta}(v)\ge S(v_0;v)$ for every valuation $v\in\Val^*_{X,x}$. Inspired by the argument of \cite{Li17}, we consider a family $\fb_{\bullet,t}$ ($t\in\bR_{\ge 0}$) of graded sequences of ideals that interpolate the valuation ideal sequences of $v_0$ and $v$, defined as follows: we set $\fb_{\bullet,0}=\fab(v_0)$; when $t>0$, we set
\begin{equation} \label{e-b_m,t}
    \fb_{m,t}=\sum_{i=0}^m \fa_{m-i}(v_0)\cap \fa_i(tv).
\end{equation}
Roughly speaking, the ideal $\fb_{m,t}$ is generated by elements $f\in R$ with $v_0(f)+t\cdot v(f)\ge m$. By \eqref{e-hvol=lct^n*mult}, we have 
\[
\lct(\fb_{\bullet,t})^n\cdot \mult(\fb_{\bullet,t})\ge \hvol(v_0)=\lct(\fb_{\bullet,0})^n\cdot \mult(\fb_{\bullet,0}).
\]
To relate this to the K-semistability of $v_0$, the idea is to take the derivative of the above normalized multiplicities at $t=0$, which was a technique introduced in \cite{Li17}. To do so we analyze the log canonical thresholds and multiplicities of $\fb_{\bullet,t}$.

\subsubsection{Log canonical threshold of summation}
We first establish an inequality for the log canonical thresholds of graded sequences of ideals. Given two graded sequences of ideals $\fa_{\bullet}$ and $\fb_{\bullet}$, we define $\fcb:=\fab\boxplus\fbb$ by setting
\[
\fc_m=(\fa\boxplus\fb)_m=\sum_{i=0}^m \fa_i\cap\fb_{m-i}.
\]
It is easy to verify that $\fcb$ is also a graded sequence of ideals. Note that our definition differs from the usual sum of ideal sequences (see e.g. \cite{Mus02}) since we use intersections of ideals rather than taking product.

\begin{thm}\label{t-lctsum}
Under the above notation, assume $\fa_{\bullet}$ and $\fb_{\bullet}$ are graded sequences of $\fm_x$-primary ideals. Then we have 
$$\lct(\fc_{\bullet})\le \lct(\fa_{\bullet})+\lct(\fb_{\bullet}).$$
\end{thm}

We denote by $\cJ(\fa^t)$ the multiplier ideal of a fractional ideal and similarly by $\cJ(\fab^t)$ the asymptotic multiplier ideal of a graded sequence of ideals $\fab$ with exponent $t$ (see \cite{Laz-positivity-2} for details). The above inequality will follow from a summation formula of multiplier ideals.

\begin{lem}\label{l-summation}
For any two graded sequences of ideals $\fa_{\bullet}$, $\fb_{\bullet}$ and any $t>0$, we have
\begin{equation} \label{e-summation}
    \cJ(\fcb^t)\subseteq \sum_{\lambda+\mu=t} \cJ(\fab^\lambda) \cap \cJ(\fbb^\mu)
\end{equation}
where $\fc_{\bullet}=\fa_{\bullet}\boxplus\fb_{\bullet}$.
\end{lem}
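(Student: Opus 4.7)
The plan is to deduce the inclusion from the multi-ideal summation theorem of Takagi together with the subadditivity theorem for multiplier ideals, applied to the decomposition $\fc_m = \sum_{i=0}^m (\fa_i \cap \fb_{m-i})$ at a sufficiently divisible finite level $m$. The key numerical identity that makes everything work is that, for any non-negative weights $c_i$ with $\sum_i c_i = t/m$, one has $\sum_i i c_i + \sum_i (m-i) c_i = m \sum_i c_i = t$.

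First I would reduce to finite level: for $m$ sufficiently divisible one has $\cJ(\fcb^t) = \cJ(\fc_m^{t/m})$, so it is enough to show $\cJ(\fc_m^{t/m}) \subseteq \sum_{\lambda+\mu=t} \cJ(\fab^\lambda) \cap \cJ(\fbb^\mu)$ for such $m$. Applying the (iterated) Takagi summation theorem to $\fc_m = \sum_{i=0}^m (\fa_i \cap \fb_{m-i})$ yields
\[
\cJ(\fc_m^{t/m}) \subseteq \sum_{\vec{c}:\,\sum_i c_i = t/m,\ c_i \ge 0}\ \prod_{i=0}^m \cJ((\fa_i \cap \fb_{m-i})^{c_i}).
\]
For each $\vec{c}$, monotonicity applied to the inclusions $\fa_i \cap \fb_{m-i} \subseteq \fa_i$ and $\subseteq \fb_{m-i}$ gives $\cJ((\fa_i \cap \fb_{m-i})^{c_i}) \subseteq \cJ(\fa_i^{c_i}) \cap \cJ(\fb_{m-i}^{c_i})$, so the product above is contained in $\bigl(\prod_i \cJ(\fa_i^{c_i})\bigr) \cap \bigl(\prod_i \cJ(\fb_{m-i}^{c_i})\bigr)$. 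Combining the elementary inclusions $\cJ(\fa_i^{c_i}) \subseteq \cJ(\fab^{ic_i})$ and $\cJ(\fb_{m-i}^{c_i}) \subseteq \cJ(\fbb^{(m-i)c_i})$, which follow directly from the definition $\cJ(\fab^s) = \bigcup_p \cJ(\fa_p^{s/p})$, with the iterated subadditivity $\cJ(\fab^a) \cdot \cJ(\fab^b) \subseteq \cJ(\fab^{a+b})$ we obtain $\prod_i \cJ(\fa_i^{c_i}) \subseteq \cJ(\fab^{\sum_i i c_i})$ and similarly $\prod_i \cJ(\fb_{m-i}^{c_i}) \subseteq \cJ(\fbb^{\sum_i (m-i)c_i})$. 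Setting $\lambda = \sum_i i c_i$ and $\mu = \sum_i (m-i) c_i$, the identity $\lambda+\mu = t$ places the term into $\cJ(\fab^\lambda) \cap \cJ(\fbb^\mu)$, a summand on the right-hand side.

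The main technical check will be confirming that Takagi's multi-ideal summation theorem and subadditivity both hold with the required generality on the klt pair $(X,\Delta)$ for multiplier ideals with fractional exponents; these are by now standard in characteristic-zero multiplier ideal theory, and extend from the smooth case to the klt case via joint log resolutions of all ideals involved together with the usual log-discrepancy conventions (cf.~\cite{Laz-positivity-2,JM12}).
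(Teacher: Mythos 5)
Your reduction to finite level and the appeal to Takagi's summation formula match the paper's first step, but the way you process each resulting term has a genuine gap. After splitting $\cJ\bigl(\prod_i(\fa_i\cap\fb_{m-i})^{c_i}\bigr)$ into the product $\prod_i\cJ\bigl((\fa_i\cap\fb_{m-i})^{c_i}\bigr)$, you must reassemble $\prod_i\cJ(\fa_i^{c_i})$ into the single asymptotic multiplier ideal $\cJ(\fab^{\sum_i ic_i})$, and you justify this by ``iterated subadditivity $\cJ(\fab^a)\cdot\cJ(\fab^b)\subseteq\cJ(\fab^{a+b})$''. That containment is the \emph{reverse} of subadditivity and is false in general: for $\fa_m=\fm^m$ at the origin of $\bC^2$ one has $\cJ(\fab^1)\cdot\cJ(\fab^1)=\cO_X\not\subseteq\fm=\cJ(\fab^2)$, so $\prod_i\cJ(\fa_i^{c_i})\subseteq\cJ(\fab^{\sum_i ic_i})$ already fails for, say, $c_1=1$, $c_2=1/2$. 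In addition, the genuine subadditivity you invoke to split the mixed multiplier ideal into a product of multiplier ideals is a theorem on smooth varieties; on a klt pair with $X$ singular it only holds with a Jacobian-ideal correction, so this is not the routine verification you suggest --- and in the paper the lemma is applied on an arbitrary klt singularity.

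The paper's proof avoids both problems by never leaving the mixed multiplier ideal. From $\fa_i\cap\fb_{m-i}\subseteq\fa_i$ and $\fa_i^{m!/i}\subseteq\fa_{m!}$, monotonicity of mixed multiplier ideals (valid on any klt pair) gives
\[
\cJ\Bigl(\prod_{i=0}^m(\fa_i\cap\fb_{m-i})^{t_i}\Bigr)\subseteq\cJ\Bigl(\prod_{i=0}^m\fa_i^{t_i}\Bigr)\subseteq\cJ\bigl(\fa_{m!}^{\lambda/m!}\bigr)\subseteq\cJ(\fab^{\lambda}),\qquad \lambda=\sum_i it_i,
\]
and symmetrically each term lies in $\cJ(\fbb^{\mu})$ with $\mu=\sum_i(m-i)t_i$, so $\lambda+\mu=t$ and the term sits in a summand of the right-hand side. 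Your argument becomes correct (and essentially identical to the paper's) if you delete the subadditivity step and the product reassembly and instead run this monotonicity inside a single mixed multiplier ideal.
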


\begin{proof}
We follow the proof of \cite{Takagi-multiplier-ideal}*{Proposition 4.10}. Let $m$ be a sufficiently large and divisible integer such that $\cJ(\fcb^t)=\cJ(\fc_m^{t/m})$. By the summation formula of multiplier ideals (see \cite{Takagi-multiplier-ideal}*{Theorem 0.1(2)}), which says that for any two ideals $\fa$ and $\fb$,
$$\cJ\big((\fa+\fb)^t\big)=\sum_{t_1+t_2=t}\cJ(\fa^{t_1}\cdot \fb^{t_2}),$$
we have
\[
\cJ(\fc_m^{t/m})=\cJ\left(\left(\sum_{i=0}^m \fa_i\cap\fb_{m-i}\right)^{t/m}\right)=\sum_{t_0+\cdots+t_m=t/m} \cJ\left(\prod_{i=0}^m (\fa_i\cap\fb_{m-i})^{t_i}\right).
\]
(The right hand side is a finite sum.) Since $\fa_i^{m!/i}\subseteq \fa_{m!}$, each individual term in the above right hand side is contained in
\[
\cJ\left(\prod_{i=0}^m \fa_i^{t_i}\right)\subseteq \cJ\left(\prod_{i=0}^m \fa_{m!}^{\frac{it_i}{m!}}\right)= \cJ\left(\fa_{m!}^{\lambda/m!}\right)\subseteq \cJ(\fab^\lambda)
\]
where $\lambda=\sum_{i=0}^m it_i$. By symmetry, it is also contained in $\cJ(\fbb^\mu)$ where $\mu=\sum_{i=0}^m (m-i)t_i$. Note that $\lambda+\mu=\sum_{i=0}^m mt_i=m\cdot \frac{t}{m}=t$, thus every
\[
\cJ\left(\prod_{i=0}^m (\fa_i\cap\fb_{m-i})^{t_i}\right)\subseteq \cJ(\fab^\lambda) \cap \cJ(\fbb^\mu)
\]
is contained in the right hand side of \eqref{e-summation}. This completes the proof.
\end{proof}

\begin{proof}[Proof of Theorem \ref{t-lctsum}]
Let $\alpha=\lct(\fa_{\bullet})$, $\beta=\lct(\fb_{\bullet})$ and let $t=\alpha+\beta$. For any $\lambda,\mu\ge 0$ with $\lambda+\mu=t$ we have either $\lambda\ge \alpha$ or $\mu\ge \beta$, therefore $\cJ(\fab^\lambda) \cap \cJ(\fbb^\mu)\subseteq \fm_x$. By Lemma \ref{l-summation} we see that $\cJ(\fc_{\bullet}^t)\subseteq \fm_x$ and hence $\lct(\fc_{\bullet})\le t= \lct(\fa_{\bullet})+\lct(\fb_{\bullet})$.
\end{proof}

\subsubsection{Multiplicities of a family of graded sequences of ideals}

We next derive a formula for the multiplicities of $\fb_{\bullet,t}$.

\begin{lem} \label{l-multiplicity}
$\mult(\fb_{\bullet,t})=\vol(v_0)-(n+1) \int_0^\infty \vol(v_0;v/u)\frac{t\rd u}{(1+tu)^{n+2}}$.
\end{lem}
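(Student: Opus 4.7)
The plan is to compute $\mult(\fb_{\bullet,t})=\lim_{m\to\infty}n!\,\ell(R/\fb_{m,t})/m^n$ by first reducing $\ell(R/\fb_{m,t})$ to an asymptotic count of basis elements in a basis of $R_m:=R/\fa_m(v_0)$ that is compatible with both valuation filtrations, then extracting the integral via integration by parts in the $v$-direction.

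\textbf{Step 1 (reduction to a bidegree count).} Since $\fa_m(v_0)\subseteq \fb_{m,t}$ (take $i=0$ in the defining sum), one has $\ell(R/\fb_{m,t})=\ell(R_m)-\ell(\fb_{m,t}/\fa_m(v_0))$, and $\ell(R_m)/(m^n/n!)\to\vol(v_0)$. Applying Lemma \ref{l-compatiblebasis} I would choose a basis $f_1,\dots,f_{N_m}\in R$ of $R_m$ compatible with both $\fab(v_0)$ and $\fab(v)$, and record the bidegrees $(a_k,b_k)=(v_0(f_k),v(f_k))$ with $a_k<m$. A direct verification shows that the image of $\fa_{m-i}(v_0)\cap\fa_i(tv)$ in $R_m$ equals $\cF_{v_0}^{m-i}R_m\cap\cF_v^{i/t}R_m$, so $\fb_{m,t}/\fa_m(v_0)=\sum_{i=0}^m\cF_{v_0}^{m-i}R_m\cap\cF_v^{i/t}R_m$ is a coordinate subspace in the compatible basis. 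A basis element $\bar f_k$ lies in this sum iff the integer interval $[\lceil m-a_k\rceil,\lfloor tb_k\rfloor]\cap[0,m]$ is non-empty, which (modulo rounding errors giving $O(m^{n-1})$) is equivalent to $a_k+tb_k\ge m$. Hence
\[
\ell(\fb_{m,t}/\fa_m(v_0))=\#\{k:a_k+tb_k\ge m\}+O(m^{n-1}).
\]

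\textbf{Step 2 (asymptotic count via the $v$-filtration).} I would then decompose this count by the $v$-filtration on $R_m$: each $\bar f_k$ with $a_k+tb_k\ge m$ lies at $v$-level $b_k$, and among the basis vectors at level $\lambda$, the ones with $a_k\ge m-t\lambda$ form the jump of the joint dimension function $G_m(a,\lambda)=\dim(\cF_{v_0}^aR_m\cap\cF_v^\lambda R_m)$ along the slanted line $a=m-t\lambda$. Summing over $\lambda$ and rescaling $\lambda=um$, a Fubini/layer-cake argument combined with the asymptotic $\dim\cF_v^{um}R_m/(m^n/n!)\to g(u):=\vol(v_0;v/u)$ from Lemma-Definition \ref{l-vol(v_0;v)} produces
\[
\mult(\fb_{\bullet,t})=\int_0^\infty\frac{-g'(u)}{(1+tu)^{n+1}}\,du.
\]
The subtle point is that the asymptotic count depends only on the marginal $g(u)$ and not on the full joint distribution of bidegrees: this uses the structural identity $\cF_v^\lambda R_m=(\fa_\lambda(v)+\fa_m(v_0))/\fa_m(v_0)$ together with the continuity statement of Lemma \ref{l-Sfunction}, in direct analogy with the global Okounkov-body/filtration computation of \cite{BJ20}*{Proposition 2.3}.

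\textbf{Step 3 (integration by parts).} An integration by parts, noting $g(0)=\vol(v_0)$ by definition and $g(u)=0$ for $u\gg 0$ by the linear boundedness in Proposition \ref{p-Izumi}, yields
\[
\int_0^\infty\frac{-g'(u)}{(1+tu)^{n+1}}\,du=g(0)-(n+1)\int_0^\infty g(u)\frac{t\,du}{(1+tu)^{n+2}}=\vol(v_0)-(n+1)\int_0^\infty\vol(v_0;v/u)\frac{t\,du}{(1+tu)^{n+2}},
\]
which is the formula in the statement.

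The hardest part is Step 2, namely extracting the limit of the joint bidegree count as a Lebesgue-Stieltjes integral against $-g'$. The obstacle is that $(\alpha_k,\beta_k):=(a_k/m,b_k/m)$ has a joint limiting distribution $\nu$ whose marginal $\nu_\beta$ is $-g'(u)\,du$, but the naive identity $\nu(\{\alpha+t\beta<1\})=\int(1+t\beta)^{-n-1}d\nu_\beta$ does not hold pointwise in $\beta$; it holds only after integrating, thanks to the specific way $\cF_v$ sits inside $R_m$. My approach is to bypass the joint distribution entirely by working level by level in the $v$-filtration, where each contribution is controlled by volumes of graded linear series (as in \cite{LM09}), and summing telescopically using the continuity of $u\mapsto\vol(v_0;v/u)$ from Lemma \ref{l-Sfunction}.
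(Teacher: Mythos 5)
Your Steps 1 and 3 are essentially sound (the modular-law identification of the image of $\fa_{m-i}(v_0)\cap\fa_i(tv)$ in $R_m$ with $\cF_{v_0}^{m-i}R_m\cap\cF_v^{i/t}R_m$, and the Stieltjes integration by parts), but Step 2, which you yourself single out as the crux, contains a genuine gap. Once you slice by the $v$-level, the contribution at level $\lambda$ is the jump of the joint dimension function $G_m(a,\lambda)=\dim\big(\cF_{v_0}^{a}R_m\cap\cF_v^{\lambda}R_m\big)$ along the slanted line $a=m-t\lambda$, and none of the inputs you invoke controls this two-filtration quantity: Lemma-Definition \ref{l-vol(v_0;v)} and Lemma \ref{l-Sfunction} (and the graded-linear-series volumes of \cite{LM09} as used there) give only the asymptotics of $G_m$ on the two coordinate axes, i.e.\ the marginals $\dim\cF_v^{um}R_m\sim g(u)m^n/n!$ and $\dim\cF_{v_0}^{\alpha m}R_m\sim(1-\alpha^n)\vol(v_0)m^n/n!$, while the identity $\cF_v^\lambda R_m=(\fa_\lambda(v)+\fa_m(v_0))/\fa_m(v_0)$ is just the definition of the filtration. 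That the fixed-$m$ count of basis elements with $a_k+tb_k\ge m$ is asymptotically determined by the marginal $g$ alone is precisely the content to be proved, and your ``level by level, then sum telescopically using continuity of $g$'' does not supply the missing input. (Secondary issues: the $O(m^{n-1})$ bound for the rounding/boundary band is asserted, not proved — one needs, and can only expect, an $o(m^n)$ statement; and the compatible basis should be taken for $\fab(v_0)$ and $\fab(tv)$, or for the real filtrations, since $\fb_{m,t}$ involves $v$-levels $i/t$.)

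The paper closes exactly this gap by a different mechanism: it computes $\mult(\fb_{\bullet,t})$ via the Ces\`aro formula $\mult(\fb_{\bullet,t})=\lim_m\frac{(n+1)!}{m^{n+1}}\sum_{j\le m}\ell(R/\fb_{j,t})$ and the exact sequences coming from the modular-law identity, which yield $\ell(R/\fb_{j,t})=\ell(R/\fa_j(v_0))-\sum_i\ell\big(\cF_v^{i/t}(\fa_{j-i}(v_0)/\fa_{j-i+1}(v_0))\big)$; summing over $j\le m$ telescopes the graded pieces into the purely marginal quantities $\ell\big(\cF_v^{i/t}(R/\fa_{m-i+1}(v_0))\big)$, whose rescaled limits are $g\big(\tfrac{y}{t(1-y)}\big)(1-y)^n$, and dominated convergence plus the substitution $u=y/(t(1-y))$ gives the formula. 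If you want to salvage your fixed-$m$ route, you would have to establish the two-variable asymptotics $\lim_m\frac{n!}{m^n}\dim\big(\cF_{v_0}^{\alpha m}R_m\cap\cF_v^{\beta m}R_m\big)=g(\beta)-\alpha^n g(\beta/\alpha)$, which does follow from applying \cite{LM09} to the mixed graded families $\{\fa_{\beta m}(v)\cap\fa_{\alpha m}(v_0)\}_m$ together with the rescaling homogeneity of valuation ideals; but that computation, which is where the ring structure actually enters, is absent from your proposal, so as written the argument does not go through.
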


\begin{proof}
By definition, we have
\[
\mult(\fb_{\bullet,t})=\lim_{m\to\infty} \frac{\ell (R/\fb_{m,t})}{m^n/n!}.
\]
However, to derive the statement of the lemma, it is better to use a different formula, which follows from the above equality:
\begin{equation} \label{e-mult(b_t)-1}
    \mult(\fb_{\bullet,t})=\lim_{m\to\infty} \frac{\sum_{j=1}^m \ell (R/\fb_{j,t})}{m^{n+1}/(n+1)!}.
\end{equation}
For ease of notation, let $\fab=\fab(v_0)$. We have 
\[
\big(\fa_{j-\ell-1}\cap \fa_{\ell+1}(tv)\big)\cap \sum_{i=0}^\ell \big(\fa_{j-i}\cap \fa_i(tv)\big) = \fa_{j-\ell}\cap \fa_{\ell+1}(tv)
\]
for all $0\le \ell<j$ and we get short exact sequences
\[
0\to \frac{\fa_{j-\ell-1}\cap \fa_{\ell+1}(tv)}{\fa_{j-\ell}\cap \fa_{\ell+1}(tv)} \to \frac{R}{\sum_{i=0}^\ell \fa_{j-i}\cap \fa_i(tv)}\to \frac{R}{\sum_{i=0}^{\ell+1} \fa_{j-i}\cap \fa_i(tv)}\to 0.
\]
Thus from the definition of $\fb_{\bullet,t}$, we get
\[
\ell(R/\fb_{j,t})=\ell(R/\fa_j)-\sum_{i=1}^{j} \ell( \cF^{i/t}_v (\fa_{j-i}/\fa_{j-i+1}) ).
\]
Summing over $j=0,1,\cdots,m$ we obtain
\begin{align*}
    \sum_{j=0}^m \ell (R/\fb_{j,t}) &= \sum_{j=1}^m \ell(R/\fa_j) - \sum_{1\le i\le j\le m} \ell( \cF^{i/t}_v (\fa_{j-i}/\fa_{j-i+1}) )\\
    &= \sum_{j=1}^m \ell(R/\fa_j) - \sum_{i=1}^m \ell( \cF^{i/t}_v (R/\fa_{m-i+1}) )
\end{align*}
Combining with \eqref{e-mult(b_t)-1}, we deduce that
\begin{equation} \label{e-mult(b_t)-2}
    \mult(\fb_{\bullet,t})=\vol(v_0) - (n+1)\cdot \lim_{m\to\infty} \frac{W_m}{m^{n+1}/n!}
\end{equation}
where $W_m:=\sum_{i=1}^m \ell( \cF^{i/t}_v (R/\fa_{m-i+1}) )$. To analyze the limit in the above expression, we set (c.f. the proof of Lemma \ref{l-Sfunction} or the argument in \cite{Li17}*{Section 4.1.1})
\begin{align*}
    \phi_m(y) & =\frac{\ell( \cF^{\lceil my\rceil/t}_v (R/\fa_{m-\lceil my\rceil + 1}) )}{m^n/n!}\\
    & =\frac{\ell( \cF^{\lceil my\rceil/t}_v (R/\fa_{m-\lceil my\rceil + 1}) )}{(m-\lceil my\rceil + 1)^n/n!}\cdot \frac{(m-\lceil my\rceil + 1)^n}{m^n}
\end{align*}
where $0< y< 1$. It is not hard to check that 
\[
\lim_{m\to\infty} \phi_m(y)=g\left(\frac{y}{t(1-y)}\right)(1-y)^n
\]
where $g(u)=\vol(v_0;v/u)$, hence by the dominated convergence theorem we have
\begin{align*}
    \lim_{m\to\infty} \frac{W_m}{m^{n+1}/n!} & = \lim_{m\to\infty}\int_0^1 \phi_m(y) \rd y \\
    & = \int_0^1 g\left(\frac{y}{t(1-y)}\right)(1-y)^n \rd y = \int_0^\infty g(u)\frac{t\rd u}{(1+tu)^{n+2}}.
\end{align*}
Together with \eqref{e-mult(b_t)-2} this implies the statement of the lemma.
\end{proof}

We are now ready to give the proof of Theorem \ref{t-minimizer K-ss}.

\begin{proof}[Proof of Theorem \ref{t-minimizer K-ss}]
Let $v\in\Val^*_{X,x}$. Up to rescaling, we may assume that $A_{X,\Delta}(v_0)=A_{X,\Delta}(v)=1$. Define $\fb_{\bullet,t}$ ($t\ge0$) as in \eqref{e-b_m,t}, and let 
$$f(t) :=(1+t)^n\cdot\mult(\fb_{\bullet,t}).$$ Clearly $f(0)=\hvol(v_0)$.
By Theorem \ref{t-lctsum} we have
\[
\lct(\fb_{\bullet,t})\le \lct(\fab(v_0))+\lct(\fab(tv))\le \frac{A_{X,\Delta}(v_0)}{v_0(\fab(v_0))}+\frac{A_{X,\Delta}(v)}{v(\fab(tv))}\le 1+t.
\]
Hence for all $t\ge 0$,
$$f(t)\ge \lct(\fb_{\bullet,t})^n\cdot \mult(\fb_{\bullet,t})\ge \hvol(v_0)=f(0),$$ where the second inequality follows from \eqref{e-hvol=lct^n*mult} and the assumption that  $v_0$ is a minimizer of $\hvol_{X,\Delta}$. Thus $f'(0)\ge 0$. Using Lemma \ref{l-multiplicity}, we find
\[
f'(0)=n\cdot \vol(v_0)-(n+1)\int_0^\infty \vol(v_0;v/u)\rd u,
\]
thus 
\[
A_{X,\Delta}(v)=1 \ge \frac{n+1}{n}\cdot \frac{\int_0^\infty \vol(v_0;v/u)\rd u}{\vol(v_0)} = S(v_0;v).
\]
Since $v\in\Val^*_{X,x}$ is arbitrary, it follows that $v_0$ is K-semistable.
\end{proof}

\begin{rem}
If we combine together Theorems \ref{t-kollarcomponent}, \ref{t-uniqueness} and \ref{t-minimizer K-ss}, we get a proof of the fact that a Koll\'ar component is a minimizer if and only if it is K-semistable, which was first established in \cites{Li17, LX20}. While in the proof of Theorem \ref{t-minimizer K-ss}, we still use a version of the derivative formula introduced in \cite{Li17}, we do not need it for the converse.
\end{rem}

\section{Applications}

In this section, we prove the results mentioned in the introduction.

\begin{proof}[Proof of Theorem \ref{t-mainunique}]
By \cite{Blu18} (see also \cite{Xu20}*{Remark 3.8}), there exists $v_0\in\Val^*_{X,x}$ such that $\hvol(v_0)=\hvol(x,X,\Delta)$. By Theorem \ref{t-minimizer K-ss}, $v_0$ is K-semistable, thus by Theorem \ref{t-uniqueness}, it is the unique minimizer of the normalized volume function up to scaling.
\end{proof}

\begin{proof}[Proof of Corollary \ref{c-Ginvariant}]
If $v_0$ is a minimizer of $\hvol_{X,\Delta}$, then for any $g\in G$, the valuation $g\cdot v_0$ defined by $(g\cdot v_0)(s)=v_0(g^{-1}\cdot s)$ is also a minimizer of $\hvol_{X,\Delta}$. By Theorem \ref{t-mainunique}, we have $g\cdot v_0=\lambda v_0$ for some $\lambda>0$; but since $A_{X,\Delta}(v_0)=A_{X,\Delta}(g\cdot v_0)$, we must have $\lambda=1$, hence $v_0=g\cdot v_0$ is $G$-invariant.
\end{proof}

Theorem \ref{t-finitedegformula} is then an easy consequence of Corollary \ref{c-Ginvariant}. 

\begin{proof}[Proof of Theorem \ref{t-finitedegformula}]
Let $G=\Aut(Y/X)$ be the Galois group. By Corollary \ref{c-Ginvariant}, the minimizer $v_0$ of $\hvol_{Y,\Delta_Y}$ is $G$-invariant, hence $\hvol(y,Y,\Delta_Y)=\hvol^G(y,Y,\Delta_Y)$, where 
\[
\hvol^G(x,X,\Delta):=\inf_{v\in\Val^G_{X,x}}\hvol_{(X,\Delta),x}(v)
\]
as the infimum runs over all valuations $v\in\Val_{X,x}$ that are invariant under the $G$-action. 

By \cite{LX-cubic-3fold}*{Theorem 2.7(1)}, we get $\hvol^G(y,Y,\Delta_Y)=|G|\cdot \hvol(x,X,\Delta)$ (in \emph{loc. cit.} it is assumed that $\Delta_Y=0$ and $f$ is \'etale in codimension one, but the proof applies in general since these assumptions are only used to guarantee that $\Delta=0$). Thus 
\[
\hvol(y,Y,\Delta_Y)=|G|\cdot \hvol(x,X,\Delta)=\deg(f)\cdot \hvol(x,X,\Delta).
\]
\end{proof}

In fact, the above argument implies the finite degree formula for any quasi-\'etale (i.e. \'etale in codimension one) finite morphism $Y\to X$, as we can pass to the Galois closure of $Y/X$, which is also quasi-\'etale. However, if there is a branched divisor, then the pull back of $K_X+\Delta$ to the Galois closure of $Y/X$ might have negative coefficients.

\begin{proof}[Proof of Corollary \ref{c-localfund}]
For the germ of a klt singularity $(X,\Delta)$, by \cites{Xu14,Braun-local-pi_1} (see also \cite{TX-local-pi_1}), the fundamental group $\pi_1(x,X^{\rm sm})$ of the smooth locus $X^{\rm sm}$ of  is finite.

Let $f\colon (Y,y)\to (X,x)$ be the universal cover of $X^{\rm sm}$ and let $\Delta_Y=f^*\Delta$. Then we have $K_Y+\Delta_Y=f^*(K_X+\Delta)$, hence by Theorem \ref{t-finitedegformula} we get $\hvol(y,Y,\Delta_Y)=\deg(f)\cdot \hvol(x,X,\Delta)$. By \cite{LX-cubic-3fold}*{Theorem A.4}, we also have $\hvol(y,Y,\Delta_Y)\le n^n$ with equality if and only if $y\in X$ is smooth and $\Delta_Y=0$. It follows that 
\[
\deg(f)=\#|\pi_1(x,X^{\rm sm})|\le \frac{n^n}{\hvol(x,X,\Delta)}
\]
and the equality holds if and only if $\big(y\in (Y,\Delta_Y)\big)\cong (0\in\bC^n)$ (\'etale locally), i.e., $\Delta=0$ and $(x\in X)$ is \'etale locally isomorphic to $\bC^n/G$ where $G\cong \pi_1(x,X^{\rm sm})$ and the action of $G$ is fixed point free in codimension one.
\end{proof}

\begin{proof}[Proof of Theorem \ref{t-Cartier}]
By \cite{BJ20}*{Theorem D}, we have
\[
\hvol(x,X,\Delta)\ge \left(\frac{n}{n+1}\right)^n \cdot \delta(X,\Delta)^n\cdot \left(-(K_X+\Delta)\right)^n.
\]
Thus the result follows immediately from Corollary \ref{c-localfund}.
\end{proof}

\bibliography{ref}

\end{document}